\newcommand{\supportFunction}{{\sigma}}
\newcommand{\SupportFunction}{\sigma}
\newcommand{\SingularValues}{s}
\newcommand{\couplingCapraNormMatrix}{\couplingCAPRA}
\newcommand{\CoordinateNormMatrix}[2]{{#1}_{(#2)}^{\mathrm{rk}}}
\newcommand{\CoordinateNormDualMatrix}[2]{{#1}_{(#2),\star}^{\mathrm{rk}}}
\newcommand{\NormVector}{\Norm}
\newcommand{\TopNormMatrix}{\TopNorm}
\newcommand{\LEQBALLrangK}[1]{\mathbb{B}^{\leq #1}_{\NormMatrix{\cdot}}}
\newcommand{\BALLrangK}[1]{\mathbb{B}^{= #1}_{\NormMatrix{\cdot}}}
\newcommand{\LEQSPHERErangK}[1]{\SPHERE^{\leq #1}_{\NormMatrix{\cdot}}}
\newcommand{\SPHERErangK}[1]{\SPHERE^{= #1}_{\NormMatrix{\cdot}}}
\newcommand{\MATRIX}{{\cal M}}
\newcommand{\MATRIXdim}[2]{\MATRIX_{#1,#2}}
\newcommand{\MATRIXrang}[1]{{\cal M}^{=#1}}
\newcommand{\MATRIXrangLEQ}[1]{{\cal M}^{\leq #1}}
\newcommand{\transpose}[1]{#1^{\mathrm{T}}}
\newcommand{\primalMatrix}{M}
\newcommand{\fonctionprimalMatrix}{F} 
\newcommand{\dualMatrix}{N}
\newcommand{\orthogroup}[1]{{\cal O}_{#1}}
\newcommand{\Trace}{\mathrm{Tr}}
\newcommand{\rows}{m}
\newcommand{\columns}{n}
\newcommand{\minrowscolumns}{d}
\newcommand{\LocalDiagonal}{i}
\newcommand{\LocalRank}{r}
\renewcommand{\LocalIndex}{k}
\renewcommand{\LocalIndexbis}{l}
\newcommand{\integer}{t} 
\DeclareMathOperator{\rank}{rk}
\DeclareMathOperator{\diag}{diag}
\renewcommand{\bleue}[1]{#1}
\title{Rank-Based Norms, \Capra-Conjugacies\\ and the Rank Function}
\author{Paul Barbier, Jean-Philippe Chancelier,
  Michel De Lara, Valentin Paravy, \\ CERMICS, \'Ecole des Ponts, Marne-la-Vall\'ee, France}
\begin{document}
\maketitle

\begin{abstract}
  We consider the space of matrices, with given number of rows and of columns,
  equipped with the classic trace scalar product.
  With any matrix (source) norm, we associate a coupling, called Capra, between the space of matrices and
  itself. Then, we compute the Capra conjugate and biconjugate of the rank function.
  They are expressed in function of a sequence of rank-based norms, more precisely 
  generalized r-rank and dual r-rank matrix norms associated with the matrix source norm.
  We deduce a lower bound of the rank function given by a variational formula which involves
  the generalized r-rank norms. In the case of the Frobenius norm, we show that
  the rank function is equal to the variational formula. 
\end{abstract}

\textbf{Keywords.}
rank function, matrix norm, rank-based norm, generalized convexity, Capra conjugacy

\section{Introduction}
\label{sec1}

%
The rank function is a well-known example of nonconvex and nonsmooth function
over matrices
(as it is not possible to cover all the references on such a large subject,
we refer the reader to a small subset
\cite{Fazel-Hindi-Boyd:2001,Fazel-Hindi-Boyd:2004}
of the literature
and to \cite{Hiriart-Urruty-Le:2013} which offers a kind of survey of the rank
function). 
In this paper, we display a variational lower bound of the rank function that
involves a sequence of suitable norms.

For this purpose, we introduce sequences of rank-based norms ---
more precisely, generalized $\LocalRank$-rank and dual $\LocalRank$-rank matrix
norms --- generated from any (source) norm.
\bleue{%
  This construction, for matrices, can also be found in
  \cite{Grussler-Giselsson:2018}, but in the case of unitarily invariant source
  norms.}
With a general source norm, we also define a coupling between the space of matrices and
itself, and we compute the biconjugate of the rank function 
under the associated conjugacy.
We deduce a lower bound variational formula for the rank function which involves
generalized $\LocalRank$-rank norms. 
Moreover, when the source norm is the Frobenius norm, we
prove that the inequality is an equality.
\bleue{%
  The conjugacy we use is not the Fenchel conjugacy.
  This latter has been used for instance in~\cite{Larsson-Olsson:2016} to obtain
  convex lower envelopes of matrix functions of the form
  \( \primalMatrix \mapsto \varphi\bp{\rank\np{\primalMatrix}} +
  \NormMatrix{\primalMatrix-\primalMatrix_0}^2 \), or in~\cite{Grussler-Giselsson:2018} to obtain
  convex lower envelopes of matrix functions of the form
  $\primalMatrix \mapsto \varphi\np{\norm{\primalMatrix}} + \delta_{\rank\np{\primalMatrix}\le r}$,
  and then get convex low rank approximation of
  optimization problems with low rank solutions. This is not the approach we
  follow in this paper: we use a new \Capra-conjugacy to analyze, and provide
  variational formulas for, the rank function, but we are not motivated (at this
  stage) by possible use in optimization under rank constraint.}

The paper is organized as follows. 
In Sect.~\ref{Rank-based norms}, we define and study rank-based norms.
In Sect.~\ref{CAPRA-conjugacies_and_the_rank_function},
we introduce \Capra-conjugacies and their relations with the rank function.

\section{Rank-based norms}
\label{Rank-based norms}

In \S\ref{Notation}, we fix notation.
In \S\ref{Definition_of_generalized_rank_and_dual_rank_matrix_norms}, we define
rank-based norms as, more precisely, 
generalized $\LocalRank$-rank and dual $\LocalRank$-rank matrix norms.
In \S\ref{The_case_of_unitarily_invariant_source_matrix_norms}, we detail 
the case of unitarily invariant source matrix norms. 

\subsection{Notation}
\label{Notation}

In all the paper, we consider two fixed positive integers
$\rows$ (number of rows) and $\columns$ (number of columns),
and we denote \( \minrowscolumns=\min\np{\rows,\columns} \).
We use the notation \(
\ic{\LocalIndex,\LocalIndexbis}=\na{\LocalIndex,\LocalIndex+1,\ldots,\LocalIndexbis-1,\LocalIndexbis}
\) for any pair of integers such that \( \LocalIndex \leq \LocalIndexbis \).
We denote by~$\MATRIXdim{\rows}{\columns}$ 
the space of real matrices with $\rows$~rows and $\columns$~columns,
by~$\rank : \MATRIXdim{\rows}{\columns} \to \NN$ the rank function (where $\NN$ is the set of non-negative integers)
and by~$\MATRIXrangLEQ{\LocalRank}$ (resp. $\MATRIXrang{\LocalRank}$) the subset of matrices of rank less
than or equal to~$\LocalRank$ (resp. equal to~$\LocalRank$).
We recall that the singular values of a matrix~$\primalMatrix \in
\MATRIXdim{\rows}{\columns}$ are the square root of the (nonnegative)
eigenvalues of the square matrix~$\transpose{\primalMatrix}\primalMatrix$, and we denote
by $ \SingularValues\np{\primalMatrix}=\sequence{\SingularValues_{\LocalDiagonal}\np{\primalMatrix}}{\LocalDiagonal\in\ic{1,\minrowscolumns}}
\in \RR^{\minrowscolumns}$ the vector
composed of the singular values of~$\primalMatrix$ arranged in nonincreasing
order, that is, 
\begin{equation}
  \SingularValues\np{\primalMatrix} \in \Cone =
  \bset{\primal\in\RR^{\minrowscolumns}}
  {\primal_{1} \geq \cdots \geq \primal_{\minrowscolumns} \geq 0 }
  = \SingularValues\np{\MATRIXdim{\rows}{\columns} }
  \eqsepv \forall \primalMatrix\in \MATRIXdim{\rows}{\columns} 
  \eqfinp 
  \label{eq:CONE}
\end{equation}
For any \( \integer\in\NN^* = \NN\setminus\na{0} \), we denote by~\( \orthogroup{\integer} \) the group of orthogonal square
$\integer\times\integer$ matrices.
It is established that, for any matrix~$\primalMatrix \in
\MATRIXdim{\rows}{\columns}$, there exists a \emph{singular value decomposition} \cite[p. 6]{Bhatia:1997}
\( \primalMatrix = U\diag(\SingularValues\np{\primalMatrix})\transpose{V} \) of the
matrix~$\primalMatrix$, where $U \in \orthogroup{{\rows}}$ and $V \in
\orthogroup{{\columns}}$.
It is also readily proven that, for any matrix~$\primalMatrix \in
\MATRIXdim{\rows}{\columns}$, for any $U \in \orthogroup{{\rows}}$ and $V \in
\orthogroup{{\columns}}$, we have that
\( \SingularValues\np{\primalMatrix}=\SingularValues\np{U\primalMatrix V} \). 

When equipped with the scalar product
$ \MATRIXdim{\rows}{\columns}^2 \ni \primalMatrix, \dualMatrix \mapsto
\Trace\np{\primalMatrix\transpose{\dualMatrix}}$, $\MATRIXdim{\rows}{\columns}$ is an
Euclidean space which is in duality with itself.
As we manipulate functions with values 
in~$\barRR = [-\infty,+\infty] $,
we adopt the Moreau \emph{lower and upper additions} \cite{Moreau:1970} 
that extend the usual addition with 
$\np{+\infty} \LowPlus \np{-\infty} = \np{-\infty} \LowPlus \np{+\infty} =
-\infty $ or with $ \np{+\infty} \UppPlus \np{-\infty} = \np{-\infty} \UppPlus \np{+\infty} =
+\infty$.
For any subset \( \Dual\subset \MATRIXdim{\rows}{\columns} \),
\( \SupportFunction_{\Dual} : \MATRIXdim{\rows}{\columns} \to \barRR\) denotes the 
\emph{support function of the subset~$\Dual$}:
\begin{equation}
  \SupportFunction_{\Dual}\np{\primalMatrix} = 
  \sup_{\dualMatrix\in\Dual} \Trace\np{\primalMatrix\transpose{\dualMatrix}}
  \eqsepv \forall \primalMatrix\in \MATRIXdim{\rows}{\columns} 
  \eqfinp
  \label{eq:support_function}
\end{equation}
A generic norm on the space~$\MATRIXdim{\rows}{\columns}$ of matrices
will be denoted by~$\NormMatrix{\cdot}$ and will be called \emph{matrix
  norm}\footnote{%
  In some books, the terminology \emph{matrix norm} is reserved for submultiplicative norms
  over matrices, which is not the case here.}.
By contrast, a generic norm on the space~$\RR^{\minrowscolumns}$  of vectors
will be denoted by~$\Norm{\cdot}$ and will be called \emph{vector  norm}.
For any matrix norm~$\NormMatrix{\cdot}$ on the
space~$\MATRIXdim{\rows}{\columns}$,
we denote by~$\BALL_{\NormMatrix{\cdot}} \subset
\MATRIXdim{\rows}{\columns}$ and~$\SPHERE_{\NormMatrix{\cdot}}\subset
\BALL_{\NormMatrix{\cdot}} \subset \MATRIXdim{\rows}{\columns}$ the associated 
unit ball and unit sphere. 
The dual norm~$\NormMatrix{\cdot}_{\star}$ of the matrix
norm~$\NormMatrix{\cdot}$ is a matrix norm on the space~$\MATRIXdim{\rows}{\columns}$,
defined by
\( \NormMatrix{\cdot}_{\star}=\SupportFunction_{\BALL_{\NormMatrix{\cdot}}} \).

\subsection{Definition of generalized $\LocalRank$-rank and dual
  $\LocalRank$-rank matrix norms}
\label{Definition_of_generalized_rank_and_dual_rank_matrix_norms}

To define rank-based norms, one could take inspiration from the following construction of
vector norms as
in~\cite[Definition~3.2]{Chancelier-DeLara:2022_CAPRA_OPTIMIZATION}
and in~\cite[Definition~3]{Chancelier-DeLara:2022_SVVA}.
Given a vector norm $\NormVector{\cdot}$ on~$\RR^d$, one can define other norms as
follows: for any subset~$K \subset \ic{1,d}$ of indices and for any vector
$\primal$,
one denotes by~$\primal_K$ the vector that coincides with~$\primal$ for the
indices in~$K$ and with null entries outside~$K$;
by taking the supremum of the norm of all these vectors~$\primal_K$,
for a cardinality~$\cardinality{K}$ of~$K$ smaller or equal to a fixed integer~$r$, one obtains
$\max\limits_{\cardinality{K} \leq r} \NormVector{\primal_{K}}$.
Such a construction indeed defines a norm, which has been studied in
\cite{Chancelier-DeLara:2022_CAPRA_OPTIMIZATION,Chancelier-DeLara:2022_SVVA}.
Unfortunately, this procedure does not work with the rank, as we illustrate
below. 

Let $\NormMatrix{\cdot}$ be the  \( \ell_1 \)-norm on the space~$\MATRIXdim{\rows}{\columns}$ of
matrices, that is, \( \NormMatrix{\primalMatrix} \) is the sum of the modules of
all the components of the matrix \( \primalMatrix\in\MATRIXdim{\rows}{\columns}
\).
Define, for any matrix $\primalMatrix\in\MATRIXdim{\rows}{\columns}$ and
$\LocalRank \in\ic{1,\minrowscolumns}$, 
$ \TopNormMatrix{\NormMatrix{\primalMatrix}}{\LocalRank}
=\sup_{X \subset \primalMatrix, \rank\np{X} \leq \LocalRank} \NormMatrix{X}$, where
$X \subset \primalMatrix$ is a shorthand for matrices~$X$ of
$\MATRIXdim{\rows}{\columns}$ for which there exists $K\subset\ic{1,\rows}$ and 
$L\subset\ic{1,\columns}$ such that $X$ coincides with $\primalMatrix$, except for
entries $X_{k,l}=0$ for $\np{k,l}\notin\produit{K}{L}$. 
We now show that the function~$\TopNormMatrix{\NormMatrix{\cdot}}{\LocalRank}$
is not a norm by contradicting the triangular inequality.
Indeed, consider $\rows= \columns= 2$ and the matrix
\(\primalMatrix = \begin{pmatrix} 1 & 1  \\ 1 & 1 \end{pmatrix}\).
As $\rank\np{\primalMatrix} = 1$, we get that 
$\TopNormMatrix{\NormMatrix{\primalMatrix}}{1} = 4$.
We can write
$\primalMatrix = 
\begin{pmatrix}
  1 & 0  \\
  0 & 1
\end{pmatrix} + 
\begin{pmatrix}
  0 & 1  \\
  1 & 0
\end{pmatrix}$ and we easily get that
$\TopNormMatrix{\BNormMatrix{\begin{pmatrix} 1 & 0  \\ 0 & 1 \end{pmatrix}}}{1} = 
\TopNormMatrix{\BNormMatrix{\begin{pmatrix}  0 & 1  \\ 1 & 0 \end{pmatrix}}}{1} = 1$.
However, the triangular inequality does not hold true as we have that
$4 = \TopNormMatrix{\NormMatrix{\primalMatrix}}{1} >
\TopNormMatrix{\BNormMatrix{\begin{pmatrix} 1 & 0  \\ 0 & 1 \end{pmatrix}}}{1} +
\TopNormMatrix{\BNormMatrix{\begin{pmatrix} 0 & 1  \\ 1 & 0 \end{pmatrix}}}{1} = 1 + 1 = 2$. 
                                                                                                     
This is why we turn to the following definition, 
inspired by the properties of dual coordinate-$\LocalRank$ vector norms
in \cite[Equation~(16), Proposition~3.3]{Chancelier-DeLara:2022_CAPRA_OPTIMIZATION}.
This construction, for matrices, can also be found in
\cite{Grussler-Giselsson:2018} in the unitarily invariant norm case
(see the discussion at the beginning of
\S\ref{The_case_of_unitarily_invariant_source_matrix_norms}).

\begin{proposition}
  \label{pr:CoordinateNormMatrix}
  Let $\NormMatrix{\cdot}$ be a norm on the space~$\MATRIXdim{\rows}{\columns}$ of
  matrices.
  We denote by~$\BALL_{\NormMatrix{\cdot}}$ and~$\SPHERE_{\NormMatrix{\cdot}}$ the associated
  unit ball and unit sphere, as well as, for any \( \LocalRank \in \ic{0,\minrowscolumns}\),
  \begin{equation}
    \LEQBALLrangK{\LocalRank}=
    \BALL_{\NormMatrix{\cdot}} \cap \MATRIXrangLEQ{\LocalRank}
    \eqsepv
    \BALLrangK{\LocalRank}=\BALL_{\NormMatrix{\cdot}} \cap \MATRIXrang{\LocalRank} \eqsepv
    \LEQSPHERErangK{\LocalRank}=\SPHERE_{\NormMatrix{\cdot}} \cap \MATRIXrangLEQ{\LocalRank} \eqsepv 
    \SPHERErangK{\LocalRank}=\SPHERE_{\NormMatrix{\cdot}} \cap \MATRIXrang{\LocalRank}
    \eqfinp  
    \label{eq:rank_balls_sphere}
  \end{equation}
  The following expressions~$\CoordinateNormDualMatrix{\NormMatrix{\cdot}}{\LocalRank}$ define
  a nondecreasing sequence \(
  \sequence{\CoordinateNormDualMatrix{\NormMatrix{\cdot}}{\LocalRank}}{\LocalRank\in\ic{1,\minrowscolumns}} \)
  of norms on~$\MATRIXdim{\rows}{\columns}$
  \begin{equation}
    \CoordinateNormDualMatrix{\NormMatrix{\dualMatrix}}{\LocalRank}
    =\SupportFunction_{\LEQBALLrangK{\LocalRank}}\np{\dualMatrix}
    = \sup_{\primalMatrix\in \LEQBALLrangK{\LocalRank}}\Trace\np{\primalMatrix\transpose{\dualMatrix}}
    \eqsepv \forall \dualMatrix\in \MATRIXdim{\rows}{\columns}
    \eqsepv \forall \LocalRank \in \ic{1,\minrowscolumns}
    \eqfinv 
    \label{eq:CoordinateNormMatrix}
  \end{equation}
  which satisfy
  \begin{equation}
    \CoordinateNormDualMatrix{\NormMatrix{\cdot}}{\LocalRank}
    = \SupportFunction_{\BALLrangK{\LocalRank}}
    =  \SupportFunction_{\LEQSPHERErangK{\LocalRank}}
    =  \SupportFunction_{\SPHERErangK{\LocalRank}}
    \eqsepv \forall \LocalRank \in \ic{1,\minrowscolumns}
    \eqfinp
    \label{eq:CoordinateNormMatrix=sup_RankTopNorm}      
  \end{equation}
\end{proposition}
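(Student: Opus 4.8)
The plan is to exploit that each $\CoordinateNormDualMatrix{\NormMatrix{\cdot}}{\LocalRank}$ is, by the very definition~\eqref{eq:CoordinateNormMatrix}, the support function $\SupportFunction_{\LEQBALLrangK{\LocalRank}}$ of a set. Support functions are sublinear no matter what set they come from, so positive homogeneity and the triangle inequality are automatic; the genuine work is to upgrade sublinearity to a norm and then to establish the three identities in~\eqref{eq:CoordinateNormMatrix=sup_RankTopNorm}.

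First I would verify the norm axioms. Finiteness holds because $\LEQBALLrangK{\LocalRank}\subset\BALL_{\NormMatrix{\cdot}}$ is bounded, so the supremum defining $\SupportFunction_{\LEQBALLrangK{\LocalRank}}\np{\dualMatrix}$ is finite for every~$\dualMatrix$. Absolute homogeneity follows from the symmetry $-\LEQBALLrangK{\LocalRank}=\LEQBALLrangK{\LocalRank}$, itself inherited from the symmetry of the norm ball $\BALL_{\NormMatrix{\cdot}}$ and the invariance of the rank under multiplication by~$-1$: for $\lambda<0$ one writes $\SupportFunction_{\LEQBALLrangK{\LocalRank}}\np{\lambda\dualMatrix}=\lvert\lambda\rvert\SupportFunction_{\LEQBALLrangK{\LocalRank}}\np{-\dualMatrix}=\lvert\lambda\rvert\SupportFunction_{\LEQBALLrangK{\LocalRank}}\np{\dualMatrix}$. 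For definiteness, since $\LocalRank\geq 1$ the set $\LEQBALLrangK{\LocalRank}$ contains a small multiple $\varepsilon\, e_{\LocalRow}\transpose{e_{\LocalColumn}}$ of every rank-one matrix of the canonical basis; as these span $\MATRIXdim{\rows}{\columns}$, any $\dualMatrix\neq 0$ pairs nontrivially with one of them, forcing $\SupportFunction_{\LEQBALLrangK{\LocalRank}}\np{\dualMatrix}>0$. Monotonicity of the sequence is then immediate from the inclusion $\MATRIXrangLEQ{\LocalRank}\subset\MATRIXrangLEQ{\LocalRank+1}$, hence $\LEQBALLrangK{\LocalRank}\subset\LEQBALLrangK{\LocalRank+1}$, so that taking the supremum over a larger set can only increase it.

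For the three identities, I would first record the inclusions $\SPHERErangK{\LocalRank}\subset\BALLrangK{\LocalRank}\subset\LEQBALLrangK{\LocalRank}$ and $\SPHERErangK{\LocalRank}\subset\LEQSPHERErangK{\LocalRank}\subset\LEQBALLrangK{\LocalRank}$, which already yield $\SupportFunction_{\SPHERErangK{\LocalRank}}\leq\SupportFunction_{\BALLrangK{\LocalRank}}$, $\SupportFunction_{\LEQSPHERErangK{\LocalRank}}\leq\SupportFunction_{\LEQBALLrangK{\LocalRank}}$ and, more generally, place all four support functions between the ones attached to the smallest set $\SPHERErangK{\LocalRank}$ and the largest set $\LEQBALLrangK{\LocalRank}$. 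It therefore suffices to prove the single reverse inequality $\SupportFunction_{\LEQBALLrangK{\LocalRank}}\leq\SupportFunction_{\SPHERErangK{\LocalRank}}$, as this sandwich forces equality throughout~\eqref{eq:CoordinateNormMatrix=sup_RankTopNorm}.

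To establish that reverse inequality, fix $\dualMatrix$ and let $\primalMatrix\in\LEQBALLrangK{\LocalRank}$. Its contribution to $\SupportFunction_{\LEQBALLrangK{\LocalRank}}\np{\dualMatrix}$ is harmless unless $\Trace\np{\primalMatrix\transpose{\dualMatrix}}>0$, in which case $\primalMatrix\neq 0$ and rescaling by $1/\NormMatrix{\primalMatrix}\geq 1$ produces a matrix on the sphere $\SPHERE_{\NormMatrix{\cdot}}$ of the same rank and with an at least as large pairing with $\dualMatrix$; this reduces the supremum over $\LEQBALLrangK{\LocalRank}$ to the one over $\LEQSPHERErangK{\LocalRank}$. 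It then remains to pass from rank $\leq\LocalRank$ to rank exactly $\LocalRank$ on the sphere: given $\primalMatrix\in\LEQSPHERErangK{\LocalRank}$ with $\rank\np{\primalMatrix}<\LocalRank$, I would add to its singular value decomposition $\LocalRank-\rank\np{\primalMatrix}$ extra terms $\varepsilon\, u_i\transpose{v_i}$ built from orthonormal completions, available because $\LocalRank\leq\minrowscolumns$, renormalize onto $\SPHERE_{\NormMatrix{\cdot}}$ to obtain an element of $\SPHERErangK{\LocalRank}$, and let $\varepsilon\to 0$; continuity of $\primalMatrix\mapsto\Trace\np{\primalMatrix\transpose{\dualMatrix}}$ then gives $\Trace\np{\primalMatrix\transpose{\dualMatrix}}\leq\SupportFunction_{\SPHERErangK{\LocalRank}}\np{\dualMatrix}$. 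The main obstacle is precisely this last density step, since $\MATRIXrang{\LocalRank}$ is not closed, so $\SupportFunction_{\LEQSPHERErangK{\LocalRank}}=\SupportFunction_{\SPHERErangK{\LocalRank}}$ is not a set identity but must be obtained through this approximation-and-continuity argument on the sphere; the scaling and sandwiching steps are routine by comparison.
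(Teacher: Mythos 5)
Your proposal is correct, and it follows the paper's overall skeleton (sublinearity comes for free from the support function, symmetry of the set gives absolute homogeneity, the nested inclusions $\LEQBALLrangK{\LocalRank}\subset\LEQBALLrangK{\LocalRank+1}$ give monotonicity, and the real work is identifying the four support functions), but three of your ingredients genuinely differ from the paper's. First, for definiteness the paper takes a singular value decomposition of $\dualMatrix$ and tests against the rank-one matrix $U\diag\bp{\SingularValues_1\np{\dualMatrix},0,\ldots,0}\transpose{V}$ to force $\SingularValues_1\np{\dualMatrix}=0$; your test against the canonical rank-one matrices $\pm\varepsilon\, e_{\LocalRow}\transpose{e_{\LocalColumn}}$ is more elementary and avoids singular values entirely, and it is sound because the trace pairing is an inner product for which these matrices form an orthogonal basis. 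Second, for the passage from ball to sphere the paper invokes the sandwich $\LEQSPHERErangK{\LocalRank}\subset\LEQBALLrangK{\LocalRank}\subset\convexhull\LEQSPHERErangK{\LocalRank}$ together with the fact that a support function is blind to convex hulls (their citation of Bauschke--Combettes, Proposition~7.13); your radial rescaling $\primalMatrix\mapsto\primalMatrix/\NormMatrix{\primalMatrix}$ is the same geometry made explicit, correctly exploiting that rank is scale-invariant and that only matrices with positive pairing matter --- though you should state explicitly that $\SupportFunction_{\SPHERErangK{\LocalRank}}\geq 0$ by symmetry and nonemptiness of $\SPHERErangK{\LocalRank}$, which is what makes the ``harmless'' cases truly harmless. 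Third, and most substantively, where the paper proves $\overline{\SPHERErangK{\LocalRank}}=\LEQSPHERErangK{\LocalRank}$ by citing $\overline{\MATRIXrang{\LocalRank}}=\MATRIXrangLEQ{\LocalRank}$ from Hiriart-Urruty and Le and then normalizing an abstract approximating sequence (and again using closure-blindness of support functions), you construct the approximants directly: perturbing the SVD by terms $\varepsilon\, u_i\transpose{v_i}$ from orthonormal completions (available since $\LocalRank\leq\minrowscolumns$) raises the rank to exactly $\LocalRank$, renormalizing keeps you on the sphere, and continuity of the pairing replaces the abstract closure lemma, making your argument self-contained. Your logical organization is also slightly cleaner: rather than the paper's chain of pairwise equalities, you reduce everything to the single reverse inequality $\SupportFunction_{\LEQBALLrangK{\LocalRank}}\leq\SupportFunction_{\SPHERErangK{\LocalRank}}$ and let the trivial inclusions close the loop.
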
        
\begin{proof}
  The  sequence 
  \(
  \bseqa{\CoordinateNormDualMatrix{\NormMatrix{\cdot}}{\LocalRank}}{\LocalRank\in\ic{1,\minrowscolumns}}
  \)
  in~\eqref{eq:CoordinateNormMatrix} is nondecreasing
  since the sequence \(
  \bseqa{\LEQBALLrangK{\LocalRank}}{\LocalRank \in \ic{1,\minrowscolumns}} \) of unit balls
  in~\eqref{eq:rank_balls_sphere}  is nondecreasing,
  as so is the sequence $
  \nseqa{\MATRIXrangLEQ{\LocalRank} }{\LocalRank \in\ic{1,\minrowscolumns}}$. 

  First, we prove that
  $\SupportFunction_{\SPHERErangK{\LocalRank}}=\SupportFunction_{\LEQSPHERErangK{\LocalRank}}$,
  for $ \LocalRank \in \ic{1,\minrowscolumns}$. 
  For this purpose, we show that $
  \overline{\SPHERErangK{\LocalRank}}=\LEQSPHERErangK{\LocalRank}$,
  where $ \overline{\,\cdot\,} $ denotes the topological closure.
  The inclusion $\overline{\SPHERErangK{\LocalRank}}
  \subset\LEQSPHERErangK{\LocalRank}  $ is straightforward because it is well
  known \cite[Theorem~2]{Hiriart-Urruty-Le:2013} that $\overline{\MATRIXrang{\LocalRank}}=\MATRIXrangLEQ{\LocalRank}$, from
  which we deduce that 
  $\overline{\SPHERErangK{\LocalRank}}=
  \overline{\SPHERE_{\NormMatrix{\cdot}}\cap\MATRIXrang{\LocalRank}}
  \subset \overline{\SPHERE_{\NormMatrix{\cdot}}}\cap \overline{\MATRIXrang{\LocalRank}}
  =\SPHERE_{\NormMatrix{\cdot}}\cap\MATRIXrangLEQ{\LocalRank}$, 
  the inclusion being a property of the topological closure.

  To prove the reverse inclusion
  $\SPHERE_{\NormMatrix{\cdot}}\cap\MATRIXrangLEQ{\LocalRank}
  \subset \overline{\SPHERErangK{\LocalRank}} $,
  we consider $ \primalMatrix\in
  \SPHERE_{\NormMatrix{\cdot}}\cap\MATRIXrangLEQ{\LocalRank}$. 
  As $\overline{\MATRIXrang{\LocalRank}}=\MATRIXrangLEQ{\LocalRank}$, there exists
  a sequence $\{\primalMatrix_n\}_{n\in \NN}$ in $\MATRIXrang{\LocalRank}$ such that
  $\primalMatrix_n\rightarrow \primalMatrix$ when $n \rightarrow +\infty$. Since
  $\primalMatrix\in\SPHERE_{\NormMatrix{\cdot}}$, we can always suppose that $\primalMatrix_n \ne
  0$, for all $n\in \NN$. Therefore $\frac{\primalMatrix_n}{\NormMatrix{\primalMatrix_n}}$ is well
  defined, and when $n \rightarrow +\infty $ we have that 
  $\frac{\primalMatrix_n}{\NormMatrix{\primalMatrix_n}} \rightarrow
  \frac{\primalMatrix}{\NormMatrix{\primalMatrix}}=\primalMatrix$ since
  $\primalMatrix\in\SPHERE_{\NormMatrix{\cdot}}$. Now, for all $n\in \NN$, on the one hand,
  $\frac{\primalMatrix_n}{\NormMatrix{\primalMatrix_n}}\in\MATRIXrang{\LocalRank}$ and, on the other
  hand, $\frac{\primalMatrix_n}{\NormMatrix{\primalMatrix_n}}\in\SPHERE_{\NormMatrix{\cdot}}$. As a
  consequence, we get that the sequence $ \bseqa{\frac{\primalMatrix_n}{\NormMatrix{\primalMatrix_n}}}{n\in \NN}
  \in \SPHERE_{\NormMatrix{\cdot}} \cap \MATRIXrang{\LocalRank}$, and we conclude
  that the limit of the sequence $\primalMatrix\in\overline{\SPHERErangK{\LocalRank}}$. Thus, we have proven that
  $ \overline{\SPHERErangK{\LocalRank}}=\LEQSPHERErangK{\LocalRank} $, hence
  that
  $\SupportFunction_{\LEQSPHERErangK{\LocalRank}}=
  \SupportFunction_{\overline{\SPHERErangK{\LocalRank}}}= 
  \SupportFunction_{{\SPHERErangK{\LocalRank}}}$ by
  \cite[Proposition~7.13]{Bauschke-Combettes:2017}.

  Second, we prove that 
  $ \SupportFunction_{\LEQBALLrangK{\LocalRank}}
  =  \SupportFunction_{\LEQSPHERErangK{\LocalRank}} $. 
  It is readily established that
  $ \LEQSPHERErangK{\LocalRank} \subset \LEQBALLrangK{\LocalRank}          
  \subset \convexhull\LEQSPHERErangK{\LocalRank} $
  (the convex hull of~$\LEQSPHERErangK{\LocalRank}$)
  as any point in $\LEQBALLrangK{\LocalRank}= \BALL_{\NormMatrix{\cdot}} \cap \MATRIXrangLEQ{\LocalRank}$ is the convex
  combination of a point and its opposite in $\LEQSPHERErangK{\LocalRank}
  = \SPHERE_{\NormMatrix{\cdot}} \cap \MATRIXrangLEQ{\LocalRank}$.
  Therefore, by property \cite[Proposition~7.13]{Bauschke-Combettes:2017} of the
  support function~\eqref{eq:support_function}, 
  we get that $ \SupportFunction_{\LEQBALLrangK{\LocalRank}}
  =  \SupportFunction_{\LEQSPHERErangK{\LocalRank}} $.
  By the same reasoning, we also obtain that $ \SupportFunction_{\BALLrangK{\LocalRank}}=\SupportFunction_{\SPHERErangK{\LocalRank}}
  $ which, combined with the first part, gives $ \SupportFunction_{\LEQBALLrangK{\LocalRank}}
  =  \SupportFunction_{\LEQSPHERErangK{\LocalRank}} =\SupportFunction_{{\SPHERErangK{\LocalRank}}}= 
  \SupportFunction_{\BALLrangK{\LocalRank}}
  $.
  
  Third, we prove that~\eqref{eq:CoordinateNormMatrix} defines norms.
  We consider a fixed $ \LocalRank \in \ic{1,\minrowscolumns} $.
  As the set $\LEQBALLrangK{\LocalRank}$ is easily seen to be bounded and
  symmetric, $    \CoordinateNormDualMatrix{\NormMatrix{\cdot}}{\LocalRank}
  =\SupportFunction_{\LEQBALLrangK{\LocalRank}} $ is a
  1-homogeneous subadditive function with values in~$[0,+\infty[$. 
  It remains to
  prove that, for any $\dualMatrix\in\MATRIXdim{\rows}{\columns}$,
  $ \SupportFunction_{\LEQBALLrangK{\LocalRank}}\np{\dualMatrix}=0 \Leftrightarrow \dualMatrix=0$.  For this
  purpose, we consider a matrix $\dualMatrix
  \in \MATRIXdim{\rows}{\columns}$ which satisfies
  $ \SupportFunction_{\LEQBALLrangK{\LocalRank}}\np{\dualMatrix}=0 $, 
  and we prove that $\dualMatrix = 0$.
  We consider the singular value decomposition
  $ \dualMatrix = U\diag(\SingularValues\np{\dualMatrix})\transpose{V} $ of the
  matrix~$\dualMatrix$. 
  Defining $ \primalMatrix = U\diag\bp{\SingularValues_1\np{\dualMatrix},0,\ldots,0}
  \transpose{V} $,
  the matrix~$ \primalMatrix $ has rank less than or equal to~1.
  Thus, we obtain that 
  \begin{equation*}
    \module{\SingularValues_{1}\np{\dualMatrix}}^2  =
    \module{\SingularValues_{1}\np{\primalMatrix}}^2  =
    \Trace\np{\primalMatrix\transpose{\dualMatrix}}
    \leq  \NormMatrix{\primalMatrix} 
    \sup_{\primalMatrix'\in
      \LEQSPHERErangK{\LocalRank}}\Trace(\transpose{\primalMatrix'}\dualMatrix)
    =\NormMatrix{\primalMatrix} 
    \SupportFunction_{\LEQBALLrangK{\LocalRank}}\np{\dualMatrix}=0
    \eqfinv 
  \end{equation*}
  hence that $
  \SingularValues_1\np{\dualMatrix}=0  $.
  This implies that all the singular values of~$\dualMatrix$ are null because
  $\SingularValues_1\np{\dualMatrix}$ is the largest one. Hence, we get that
  $\dualMatrix = 0$. 
  
  This ends the proof. 
\end{proof}

Now, we define rank-based norms as follows. 
\begin{definition}
  Let $\NormMatrix{\cdot}$ be a norm on the
  space~$\MATRIXdim{\rows}{\columns}$ of matrices,
  that we call \emph{source (matrix) norm}.
  The matrix norms in the nondecreasing sequence $
  \sequence{\CoordinateNormDualMatrix{\NormMatrix{\cdot}}{\LocalRank}}{\LocalRank \in\ic{1,\minrowscolumns}} $,
  given by Proposition~\ref{pr:CoordinateNormMatrix}, are called
  \emph{generalized dual $\LocalRank$-rank matrix norms}.
  By taking their dual norms
  $ \CoordinateNormMatrix{\NormMatrix{\cdot}}{\LocalRank}
  =\bp{\CoordinateNormDualMatrix{\NormMatrix{\cdot}}{\LocalRank}}_\star$, we obtain a nonincreasing
  sequence $
  \sequence{\CoordinateNormMatrix{\NormMatrix{\cdot}}{\LocalRank}}{\LocalRank \in\ic{1,\minrowscolumns}} $ of norms on~$\MATRIXdim{\rows}{\columns}$ called
  \emph{generalized $\LocalRank$-rank matrix norms}.
  \label{de:CoordinateNormMatrix}
\end{definition}
Notice that, by~\eqref{eq:CoordinateNormMatrix} for $
\LocalRank=\minrowscolumns $, and then by taking the dual norms, we get that
\begin{equation}
  \CoordinateNormDualMatrix{\NormMatrix{\cdot}}{1}
  \leq \cdots \leq 
  \CoordinateNormDualMatrix{\NormMatrix{\cdot}}{\minrowscolumns} =
  \NormMatrix{\cdot}_\star
  \mtext{ and }
  \CoordinateNormMatrix{\NormMatrix{\cdot}}{1}
  \geq \cdots \geq 
  \CoordinateNormMatrix{\NormMatrix{\cdot}}{\minrowscolumns} =
  \NormMatrix{\cdot}
  \eqfinp 
\end{equation}

When the source norm~$\NormMatrix{\cdot}$  is unitarily invariant (see \S\ref{The_case_of_unitarily_invariant_source_matrix_norms}),
the norms above have been introduced and studied in~\cite{Grussler-Giselsson:2018}.
Thus, we provide an extension (hence, the term \emph{generalized})
of the so-called \emph{rank constrained dual norm}
in~\cite[Equation~(7)]{Grussler-Giselsson:2018} to
generalized dual $\LocalRank$-rank matrix norm \( \CoordinateNormDualMatrix{\NormMatrix{\cdot}}{\LocalRank} \)
in Equation~\eqref{eq:CoordinateNormMatrix} in Proposition~\ref{pr:CoordinateNormMatrix},
and of the so-called \emph{low-rank inducing norm}
in~\cite[Equation~(8)]{Grussler-Giselsson:2018} to
generalized $\LocalRank$-rank matrix norm
\( \CoordinateNormMatrix{\NormMatrix{\cdot}}{\LocalRank} \) in 
Definition~\ref{de:CoordinateNormMatrix}.
%
This extension is justified as our main result --- namely, a lower bound for the
rank function in Theorem~\ref{th:variational_lower_bound_of_the_rank_function} --- 
holds for any source norm, unitarily invariant or not, and 
involves generalized $\LocalRank$-rank matrix norms.
Some common norms are not unitarily invariant, such as the $\ell_p$-norm for $p
\ne 2$ (including the supremum norm when $p=\infty$).
However, we have not been able to obtain explicit formulas for 
generalized $\LocalRank$-rank matrix norms in these special non unitarily
invariant cases.

\subsection{The case of unitarily invariant source matrix norms}
\label{The_case_of_unitarily_invariant_source_matrix_norms}

As just said, our main result (variational lower bound of the rank function)
does not require unitarily invariant norms.
However, we devote
this~\S\ref{The_case_of_unitarily_invariant_source_matrix_norms}
to unitarily invariant source matrix norms for two reasons:
to stress proximity and difference with \cite{Grussler-Giselsson:2018};
to provide a special case where the inequality in the forthcoming
Theorem~\ref{th:variational_lower_bound_of_the_rank_function}
is an equality. 

In~\S\ref{Background_on_unitarily_invariant_matrix_norms},
we provide background on unitarily invariant matrix norms.
In~\S\ref{Links_with_coordinate_norms_and_the_lzeropseudonorm},
we make the link between
generalized $\LocalRank$-rank and dual $\LocalRank$-rank matrix norms, on the
one hand, and generalized coordinate and dual coordinate-$\LocalRank$ norms and
the \lzeropseudonorm, on the other hand.

\subsubsection{Background on unitarily invariant matrix norms}
\label{Background_on_unitarily_invariant_matrix_norms}

We recall that a \emph{unitarily invariant norm}
on~$\MATRIXdim{\rows}{\columns}$ is a matrix norm such that
$ \NormMatrix{U \primalMatrix V}=\NormMatrix{\primalMatrix} $, for any matrix
$ \primalMatrix \in \MATRIXdim{\rows}{\columns} $ and orthogonal matrices $U \in \orthogroup{{\rows}}$,
$V \in \orthogroup{{\columns}}$.

We recall that a \emph{symmetric absolute norm} is a vector norm~$\NormVector{\cdot}$ on~$\RR^{\minrowscolumns}$ which satisfies the following properties:
$\NormVector{\cdot}$ is \emph{absolute} in the sense that 
$\NormVector{~\abs{\primal}~} =\NormVector{\primal} $, for any $ \primal\in\RR^{\minrowscolumns}
$, where $
\abs{\primal}=\np{\abs{\primal_1},\ldots,\abs{\primal_{\minrowscolumns}}} $;
$\NormVector{\cdot}$ is \emph{symmetric} (or \emph{permutation invariant}), that is, 
$\NormVector{\np{\primal_{\nu(1)},\ldots,\primal_{\nu(\minrowscolumns)}}}=
\NormVector{\np{\primal_1,\ldots,\primal_{\minrowscolumns}}} $, for any $ \primal\in\RR^{\minrowscolumns}
$ and for any permutation~$ \nu $ of the indices in~$ \ic{1,\minrowscolumns}
$.
In the literature, a symmetric absolute norm is also often called a
\emph{symmetric gauge function} (this is the vocabulary used in
\cite{Grussler-Giselsson:2018}). 
%
%
These two notions are linked by the following property (see \cite[Theorem~IV.2.1]{Bhatia:1997}).

\begin{proposition}[Von Neumann]
  \label{pr:unitarilyinvariantnormequivalence}              
  A norm $\NormMatrix{\cdot} $ on the space~$
  \MATRIXdim{\rows}{\columns} $ of matrices is
  unitarily invariant if and only if there exists
  a symmetric absolute norm~$\NormVector{\cdot}$ on~$\RR^{\minrowscolumns}$
  such that         
  \begin{equation}
    \NormMatrix{\cdot}= \NormVector{\cdot} \circ\SingularValues 
    \mtext{ that is, }
    \NormMatrix{\primalMatrix}= \NormVector{ 
      \bp{\SingularValues_1\np{\primalMatrix},\ldots,\SingularValues_{\minrowscolumns}\np{\primalMatrix}} }
    \eqfinv \forall \primalMatrix \in \MATRIXdim{\rows}{\columns}
    \eqfinp
    \label{eq:unitarilyinvariantnormequivalence}
  \end{equation}
  In that case, one has the following relation between dual norms
  \begin{equation}
    \NormMatrix{\cdot}_\star=
    \NormVector{\cdot}_\star\circ\SingularValues
    \eqfinp
    \label{eq:unitarilyinvariantnormequivalence_dual}
  \end{equation}
\end{proposition}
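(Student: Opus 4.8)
The plan is to prove the equivalence in two directions and then compute the dual norm directly, using throughout the invariance \( \SingularValues\np{U\primalMatrix V}=\SingularValues\np{\primalMatrix} \) recalled in \S\ref{Notation} together with von Neumann's trace inequality. Since \( \NormVector{\cdot} \) is permutation invariant, the ordering convention adopted for \( \SingularValues\np{\cdot} \) in~\eqref{eq:CONE} plays no role in any of the arguments below, so I will freely treat \( \SingularValues\np{\primalMatrix} \) as the vector of singular values in any convenient order.

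For the \emph{only if} direction, I would start from a unitarily invariant matrix norm \( \NormMatrix{\cdot} \) and define a candidate vector norm by \( \NormVector{\primal}=\NormMatrix{\diag\np{\primal}} \) for \( \primal\in\RR^{\minrowscolumns} \), where \( \diag\np{\primal} \) is the \( \rows\times\columns \) matrix carrying \( \primal \) on its diagonal. That \( \NormVector{\cdot} \) is a norm is inherited from \( \NormMatrix{\cdot} \) and the linearity of \( \diag \). It is absolute because changing the signs of the entries of \( \primal \) amounts to multiplying \( \diag\np{\primal} \) on the left and right by diagonal \( \pm1 \) (hence orthogonal) matrices, which leaves \( \NormMatrix{\cdot} \) unchanged; it is symmetric because permuting the entries of \( \primal \) is realized by left and right multiplication with permutation (hence orthogonal) matrices. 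Finally, writing a singular value decomposition \( \primalMatrix=U\diag\np{\SingularValues\np{\primalMatrix}}\transpose{V} \) and using unitary invariance gives \( \NormMatrix{\primalMatrix}=\NormMatrix{\diag\np{\SingularValues\np{\primalMatrix}}}=\NormVector{\SingularValues\np{\primalMatrix}} \), which is~\eqref{eq:unitarilyinvariantnormequivalence}.

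For the \emph{if} direction, I would take a symmetric absolute norm \( \NormVector{\cdot} \) and set \( \NormMatrix{\cdot}=\NormVector{\cdot}\circ\SingularValues \). Unitary invariance is then immediate from \( \SingularValues\np{U\primalMatrix V}=\SingularValues\np{\primalMatrix} \), and positive homogeneity and definiteness are straightforward since \( \SingularValues\np{\primalMatrix}=0 \) if and only if \( \primalMatrix=0 \). The only delicate point, and the main obstacle of this part, is the triangular inequality \( \NormVector{\SingularValues\np{\primalMatrix+\dualMatrix}}\leq\NormVector{\SingularValues\np{\primalMatrix}}+\NormVector{\SingularValues\np{\dualMatrix}} \). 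Here I would invoke two classical facts (see \cite{Bhatia:1997}): the Ky~Fan inequalities \( \sum_{\LocalDiagonal=1}^{\LocalIndex}\SingularValues_{\LocalDiagonal}\np{\primalMatrix+\dualMatrix}\leq\sum_{\LocalDiagonal=1}^{\LocalIndex}\SingularValues_{\LocalDiagonal}\np{\primalMatrix}+\sum_{\LocalDiagonal=1}^{\LocalIndex}\SingularValues_{\LocalDiagonal}\np{\dualMatrix} \) for all \( \LocalIndex \), which say that the singular values of a sum are weakly majorized by the sum of the singular values, and the monotonicity of a symmetric gauge function for weak majorization on the nonnegative orthant. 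Combining the two with the triangular inequality of \( \NormVector{\cdot} \) itself yields the claim.

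For the dual-norm identity~\eqref{eq:unitarilyinvariantnormequivalence_dual}, I would compute \( \NormMatrix{\dualMatrix}_\star=\sup_{\NormMatrix{\primalMatrix}\leq1}\Trace\np{\primalMatrix\transpose{\dualMatrix}} \) directly. For the upper bound, von Neumann's trace inequality gives \( \Trace\np{\primalMatrix\transpose{\dualMatrix}}\leq\sum_{\LocalDiagonal=1}^{\minrowscolumns}\SingularValues_{\LocalDiagonal}\np{\primalMatrix}\SingularValues_{\LocalDiagonal}\np{\dualMatrix} \); since \( \NormVector{\SingularValues\np{\primalMatrix}}=\NormMatrix{\primalMatrix}\leq1 \) and \( \SingularValues\np{\dualMatrix}\geq0 \), and since absoluteness lets one replace any competitor by its entrywise absolute value without increasing the norm or decreasing the pairing, this is at most \( \NormVector{\cdot}_\star\np{\SingularValues\np{\dualMatrix}} \). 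For the reverse inequality I would exhibit an optimizer: choosing \( \primal^{\star}\geq0 \) with \( \NormVector{\primal^{\star}}\leq1 \) attaining \( \NormVector{\cdot}_\star\np{\SingularValues\np{\dualMatrix}}=\sum_{\LocalDiagonal=1}^{\minrowscolumns}\primal^{\star}_{\LocalDiagonal}\SingularValues_{\LocalDiagonal}\np{\dualMatrix} \) (which exists by compactness of the unit ball), and aligning it with a singular value decomposition \( \dualMatrix=U\diag\np{\SingularValues\np{\dualMatrix}}\transpose{V} \), the matrix \( \primalMatrix^{\star}=U\diag\np{\primal^{\star}}\transpose{V} \) satisfies \( \NormMatrix{\primalMatrix^{\star}}=\NormVector{\primal^{\star}}\leq1 \) and \( \Trace\np{\primalMatrix^{\star}\transpose{\dualMatrix}}=\sum_{\LocalDiagonal=1}^{\minrowscolumns}\primal^{\star}_{\LocalDiagonal}\SingularValues_{\LocalDiagonal}\np{\dualMatrix} \), closing the equality. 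I expect the genuine technical workhorse of the whole proof to be von Neumann's trace inequality and its equality case through aligned singular vectors, which is precisely what reduces the matrix-level supremum to the vector-level dual norm.
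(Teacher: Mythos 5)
Your proof is correct, but there is nothing in the paper to compare it against line by line: the paper deliberately omits a proof of this proposition, referring the reader to \cite{Grussler-Giselsson:2018} and remarking only that the argument relies on the Von Neumann trace theorem~\eqref{eq:VonNeumannTheorem}. Your route is the classical one and is consistent with that remark: the restriction \( \primal\mapsto\NormMatrix{\diag\np{\primal}} \) for the direct implication; the Ky~Fan weak-majorization inequality \( \SingularValues\np{\primalMatrix+\dualMatrix}\prec_{w}\SingularValues\np{\primalMatrix}+\SingularValues\np{\dualMatrix} \) combined with the monotonicity of symmetric gauge functions under weak majorization on the nonnegative orthant (both in \cite{Bhatia:1997}) for the converse, which is indeed the only delicate step; and the trace inequality with its aligned equality case for the dual identity~\eqref{eq:unitarilyinvariantnormequivalence_dual}. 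Two details you handled correctly and should keep explicit in a final write-up: permutation invariance of \( \NormVector{\cdot} \) is what makes the ordering convention of~\eqref{eq:CONE} immaterial, since the singular values of your optimizer \( \primalMatrix^{\star}=U\diag\np{\primal^{\star}}\transpose{V} \) are the \emph{sorted} entries of \( \primal^{\star} \) rather than \( \primal^{\star} \) itself, so \( \NormMatrix{\primalMatrix^{\star}}=\NormVector{\primal^{\star}} \) needs symmetry; and absoluteness is what lets you take the vector optimizer \( \primal^{\star} \) nonnegative, without which that construction would pair signed entries against the nonnegative vector \( \SingularValues\np{\dualMatrix} \) and the claimed equality could fail.
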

We call Equations~\eqref{eq:unitarilyinvariantnormequivalence}
and \eqref{eq:unitarilyinvariantnormequivalence_dual}
\emph{factorization equations} as the unitarily invariant matrix norms $\NormMatrix{\cdot} $
and $\NormMatrix{\cdot}_\star$ are factorized by means of the symmetric absolute
vector norms $\NormVector{\cdot} $ and $\NormVector{\cdot}_\star$.
%
The proof relies on the so-called \emph{Von Neumann inequality trace theorem}
\cite{KyFan-Hoffman:1955}: 
\begin{equation}
  \sup_{U \in \orthogroup{{\rows}}, V \in \orthogroup{{\columns}}}
  \Trace\np{U \primalMatrix V\transpose{\dualMatrix}}
  = \proscal{\SingularValues\np{\primalMatrix}}{\SingularValues\np{\dualMatrix}}
  \eqfinv \forall \primalMatrix, \dualMatrix \in \MATRIXdim{\rows}{\columns}
  \eqfinv
  \label{eq:VonNeumannTheorem}
\end{equation}
where $ \proscal{\cdot}{\cdot} $ is the scalar product
on~$\RR^{\minrowscolumns}$.

\subsubsection{Links with generalized coordinate-$\LocalRank$ norms and the
  \lzeropseudonorm}
\label{Links_with_coordinate_norms_and_the_lzeropseudonorm}

In~\cite[Lemma~3]{Grussler-Giselsson:2018}, it is shown that,
when the source norm~$\NormMatrix{\cdot}$ is unitarily invariant,
the rank constrained dual norms (corresponding to the generalized $\LocalRank$-rank matrix norms)
are unitarily invariant,
and a factorization equation like~\eqref{eq:unitarilyinvariantnormequivalence_dual}
is given in \cite[Equation~(9)]{Grussler-Giselsson:2018}. As a consequence, the low-rank inducing norms
(corresponding to the generalized dual $\LocalRank$-rank matrix norms) are
also unitarily invariant, and a factorization equation like~\eqref{eq:unitarilyinvariantnormequivalence}
is given in \cite[Equation~(10)]{Grussler-Giselsson:2018}.

%
In Proposition~\ref{pr:Links_with_coordinate_norms_and_the_lzeropseudonorm},
we will complete this result by providing additional characterizations of the
factorization equations~\eqref{eq:unitarilyinvariantnormequivalence}
and \eqref{eq:unitarilyinvariantnormequivalence_dual} for
the generalized $\LocalRank$-rank matrix norms
and the generalized dual $\LocalRank$-rank matrix norms. 

For this purpose, we recall that the so-called \emph{\lzeropseudonorm} on~$ \RR^\minrowscolumns $ is the function
$ \lzero : \RR^\minrowscolumns \to \ic{0,\minrowscolumns} $
defined by 
\begin{equation}
  \lzero\np{\primal} 
  = \textrm{number of nonzero components of } \primal
  \eqsepv \forall \primal \in \RR^\minrowscolumns
  \eqfinp 
  \label{eq:pseudo_norm_l0}  
\end{equation}
It is clear that rank and \lzeropseudonorm\ are related through the relation
\begin{equation}
  \rank\np{\primalMatrix}
  =
  \lzero(\SingularValues\np{\primalMatrix})
  \eqsepv \forall \primalMatrix \in \MATRIXdim{\rows}{\columns}
  \eqfinp
  \label{eq:RankEqualL0Sigma}
\end{equation}

In \cite[Definition~2]{Chancelier-DeLara:2022_SVVA}
(see also \cite[Definition~3.2]{Chancelier-DeLara:2022_CAPRA_OPTIMIZATION})
we introduce, for any vector norm~$\NormVector{\cdot}$ on~$\RR^{\minrowscolumns}$,
the sequence $ \sequence{\CoordinateNorm{\NormVector{\cdot}}{\LocalRank}}{\LocalRank \in \ic{1,\minrowscolumns}} $ 
of \emph{generalized coordinate-$\LocalRank$ norms} on~$\RR^{\minrowscolumns}$,
and the sequence $ \sequence{\CoordinateNormDual{\NormVector{\cdot}}{\LocalRank}}{\LocalRank \in \ic{1,\minrowscolumns}} $ 
of \emph{generalized dual coordinate-$\LocalRank$ norms}, their dual norms.
We do not detail their definition as we will only need the forthcoming 
characterization~\eqref{eq:dual_coordinate_norm}:
the norms $ \CoordinateNormDual{\NormVector{\cdot}}{\LocalRank} $,
for any $ \LocalRank \in \ic{1,\minrowscolumns} $,
are related to the \lzeropseudonorm\ by means of 
\begin{subequations}
  \begin{align}
    \text{the level sets}\qquad 
    \LevelSet{\lzero}{\LocalRank} 
    &= 
      \defset{ \primal \in \RR^\minrowscolumns }{ \lzero\np{\primal} \leq \LocalRank }
      \eqsepv \forall \LocalRank \in \ic{0,\minrowscolumns}
      \eqfinv
      \label{eq:pseudonormlzero_level_set}
    \\
    \text{and the level curves}\qquad 
    \LevelCurve{\lzero}{\LocalRank} 
    &= 
      \defset{ \primal \in \RR^\minrowscolumns }{ \lzero\np{\primal} = \LocalRank }
      \eqsepv \forall \LocalRank \in \ic{0,\minrowscolumns} 
      \eqfinv
      \label{eq:pseudonormlzero_level_curve}
  \end{align}
\end{subequations}
as it is proven in
\cite[Equation~(16)]{Chancelier-DeLara:2022_CAPRA_OPTIMIZATION} that, 
for any $ \LocalRank \in \ic{1,\minrowscolumns} $, the generalized dual coordinate-$\LocalRank$ norm satisfies
\begin{equation}
  \CoordinateNormDual{\NormVector{\cdot}}{\LocalRank} 
  =
  \sigma_{ \LevelSet{\lzero}{\LocalRank} \cap \TripleNormSphere_{\NormVector{\cdot}}} 
  =
  \sigma_{ \LevelCurve{\lzero}{\LocalRank} \cap \TripleNormSphere_{\NormVector{\cdot}} }
  \eqfinv
  \label{eq:dual_coordinate_norm}
\end{equation}
where $ \TripleNormSphere_{\NormVector{\cdot}} \subset \RR^{\minrowscolumns} $ denotes the unit
sphere of the norm~$\NormVector{\cdot}$.
The expression~\eqref{eq:dual_coordinate_norm} is reminiscent,
using~\eqref{eq:rank_balls_sphere}, 
of the property~\eqref{eq:CoordinateNormMatrix=sup_RankTopNorm} of the
generalized rank-based norms.

In \cite[Definition~3]{Chancelier-DeLara:2022_SVVA}
we introduce, for any vector norm~$\NormVector{\cdot}$ on~$\RR^{\minrowscolumns}$,
the sequence $ \sequence{\SupportDualNorm{\NormVector{\cdot}}{\LocalRank}}{\LocalRank \in \ic{1,\minrowscolumns}} $ 
of \emph{generalized $\LocalRank$-support dual~norms}
and 
the sequence $ \sequence{\TopDualNorm{\NormVector{\cdot}}{\LocalRank}}{\LocalRank \in \ic{1,\minrowscolumns}} $ 
of \emph{generalized top-$\LocalRank$ dual~norms}.
As with generalized coordinate-$\LocalRank$ norms, we do not detail their
definition.
However, we recall their expression when $
\NormVector{\cdot}=\NormVector{\cdot}_{\ell_p} $ is the $\ell_p$ norm. 
We establish in \cite[Table~1]{Chancelier-DeLara:2022_SVVA} that 
the associated generalized coordinate-$\LocalRank$ norm 
$ \CoordinateNorm{\NormVector{\cdot}}{\LocalRank} $ is
the \emph{\lpsupportnorm{p}{\LocalRank}}~$ \LpSupportNorm{\primal}{p}{\LocalRank} $,
and the generalized dual coordinate-$\LocalRank$ norm 
$ \CoordinateNormDual{\NormVector{\cdot}}{\LocalRank} $
is the \emph{\lptopnorm{q}{\LocalRank}}~$ \LpTopNorm{\cdot}{q}{\LocalRank} $,
where $ 1/p + 1/q = 1 $. 
For $ \dual \in \RR^d $, letting $\nu$ denote a permutation of $ \{1,\ldots,\minrowscolumns\} $ such that
$ \module{ \dual_{\nu(1)} } \geq \module{ \dual_{\nu(2)} } 
\geq \cdots \geq \module{ \dual_{\nu(\minrowscolumns)} } $,
we have that 
$ \LpTopNorm{\dual}{q}{\LocalRank}=\bp{ \sum_{l=1}^{\LocalRank} \module{ \dual_{\nu(l)} }^q}^{\frac{1}{q}} $.

%

%
In the next
Proposition~\ref{pr:Links_with_coordinate_norms_and_the_lzeropseudonorm},
we show relationships between, on the one hand, the four above sequences
--- generalized coordinate-$\LocalRank$ norms,
generalized dual coordinate-$\LocalRank$ norms,
generalized $\LocalRank$-support dual~norms,
generalized $\LocalRank$-support dual~norms ---
of vector norms (related to the \lzeropseudonorm\
\cite{Chancelier-DeLara:2022_SVVA}) 
and, on the other hand, generalized $\LocalRank$-rank matrix norms
and the generalized dual $\LocalRank$-rank matrix norms
(related to the rank function \cite{Grussler-Giselsson:2018}), 
through factorization equations like ~\eqref{eq:unitarilyinvariantnormequivalence}
  and \eqref{eq:unitarilyinvariantnormequivalence_dual}. 
As discussed at the beginning
of~\S\ref{Links_with_coordinate_norms_and_the_lzeropseudonorm},
these relationships are new (in comparison with
\cite[Lemma~3]{Grussler-Giselsson:2018}). 
  
\begin{proposition}
  \label{pr:Links_with_coordinate_norms_and_the_lzeropseudonorm}
  When the source norm~$\NormMatrix{\cdot}$ on~$ \MATRIXdim{\rows}{\columns} $ is unitarily invariant,
  with associated symmetric absolute norm~$\NormVector{\cdot}$ on $\RR^{\minrowscolumns}$
  as in Proposition~\ref{pr:unitarilyinvariantnormequivalence},
  then
  the generalized $\LocalRank$-rank matrix norms
  $ \sequence{\CoordinateNormMatrix{\NormMatrix{\cdot}}{\LocalRank}}{\LocalRank
    \in\ic{1,\minrowscolumns}} $
  and 
  the generalized dual $\LocalRank$-rank matrix norms
$\sequence{\CoordinateNormDualMatrix{\NormMatrix{\cdot}}{\LocalRank}}{\LocalRank
  \in\ic{1,\minrowscolumns}} $
(see Definition~\ref{de:CoordinateNormMatrix})
are unitarily invariant and
the factorization equations~\eqref{eq:unitarilyinvariantnormequivalence}
  and \eqref{eq:unitarilyinvariantnormequivalence_dual} are given by
  \begin{subequations}
    \begin{align}
      \CoordinateNormMatrix{\NormMatrix{\cdot}}{\LocalRank}
      &=
        \CoordinateNorm{\NormVector{\cdot}}{\LocalRank} \circ\SingularValues
        = \SupportDualNorm{\NormVector{\cdot}}{\LocalRank} \circ\SingularValues
        \eqsepv
        \forall \LocalRank \in \ic{1,\minrowscolumns}
        \eqfinv
        \label{eq:rankmatrixnormstopnorm_a}
      \\
      \CoordinateNormDualMatrix{\NormMatrix{\cdot}}{\LocalRank}
      &=
        \CoordinateNormDual{\NormVector{\cdot}}{\LocalRank} \circ\SingularValues
        =\TopDualNorm{\NormVector{\cdot}}{\LocalRank} \circ\SingularValues
        \eqsepv
        \forall \LocalRank \in \ic{1,\minrowscolumns}
        \eqfinp
        \label{eq:rankmatrixnormstopnorm_b}
    \end{align}
    \label{eq:rankmatrixnormstopnorm}
  \end{subequations}
\end{proposition}

\begin{proof}
  The first part of the Proposition --- the generalized $\LocalRank$-rank matrix norms
  $ \sequence{\CoordinateNormMatrix{\NormMatrix{\cdot}}{\LocalRank}}{\LocalRank
    \in\ic{1,\minrowscolumns}} $
  and 
  the generalized dual $\LocalRank$-rank matrix norms
$\sequence{\CoordinateNormDualMatrix{\NormMatrix{\cdot}}{\LocalRank}}{\LocalRank
  \in\ic{1,\minrowscolumns}} $
are unitarily invariant --- can be found
  in~\cite[Lemma~3]{Grussler-Giselsson:2018}.
This is why,   we now turn to our contribution, namely
Equation~\eqref{eq:rankmatrixnormstopnorm}.

  We suppose that the norm~$\NormMatrix{\cdot}$ is unitarily invariant on
  $\MATRIXdim{\rows}{\columns}$ and that 
  $ \NormVector{\cdot} $ is the associated symmetric absolute norm.
  %
  For any $ \dualMatrix \in \MATRIXdim{\rows}{\columns} $,
  we have that\footnote{%
    The proof starts like in \cite[Proof to Lemma~3, A.1]{Grussler-Giselsson:2018}, but then goes on a different direction as we
    explicitely introduce the \lzeropseudonorm.}
  \begin{align*}
    \CoordinateNormDualMatrix{\NormMatrix{\dualMatrix}}{\LocalRank}
    &=
      \sup_{\NormMatrix{\primalMatrix} = 1 \eqsepv
      \rank\np{\primalMatrix} \leq \LocalRank}
      \Trace\np{\primalMatrix\transpose{\dualMatrix}}
      \tag{by definition~\eqref{eq:CoordinateNormMatrix} and property~\eqref{eq:CoordinateNormMatrix=sup_RankTopNorm}}
    \\
    &=
      \sup_{\NormMatrix{\primalMatrix} = 1, \rank\np{\primalMatrix} \leq
      \LocalRank,  U \in \orthogroup{{\rows}} ,  V \in
      \orthogroup{{\columns}}} \Trace\np{U \primalMatrix V\transpose{\dualMatrix}}
      \intertext{by change of variable $ \primalMatrix \to U\primalMatrix V $,
      and using the properties that 
      $ \NormMatrix{U \primalMatrix V} =\NormMatrix{\primalMatrix} $
      and that 
      $ \rank\np{U \primalMatrix V} =\rank\np{\primalMatrix} $}
    &=
      \sup_{\NormMatrix{\primalMatrix} = 1, \rank\np{\primalMatrix} \leq
      \LocalRank}
      \Ba{ 
      \sup_{U \in \orthogroup{{\rows}} ,  V \in \orthogroup{{\columns}}}
      \Trace\np{U \primalMatrix V\transpose{\dualMatrix}} }
    \\
    &=
      \sup_{\NormMatrix{\primalMatrix} = 1, \rank\np{\primalMatrix} \leq \LocalRank}
      \proscal{\SingularValues\np{\dualMatrix}}{\SingularValues\np{\primalMatrix}}
      \tag{using Von Neumann inequality trace theorem~\eqref{eq:VonNeumannTheorem}} 
    \\
    &=
      \sup_{\NormVector{\SingularValues\np{\primalMatrix}} = 1, \lzero\np{\SingularValues\np{\primalMatrix}}\leq\LocalRank}
      \proscal{\SingularValues\np{\dualMatrix}}{\SingularValues\np{\primalMatrix}}
      \tag{by~\eqref{eq:unitarilyinvariantnormequivalence} and~\eqref{eq:RankEqualL0Sigma}}
    \\
    &=
      \sup_{\NormVector{\primal} = 1, \lzero\np{\primal}\leq\LocalRank, \primal\in\Cone}
      \proscal{\SingularValues\np{\dualMatrix}}{\primal}
      \intertext{as easily seen from the definition~\eqref{eq:CONE}
      of the cone~$\Cone= \SingularValues\np{\MATRIXdim{\rows}{\columns} }$,
      which is in one-to-one correspondence with the image of the singular values mapping~$\SingularValues$ } 
    &=
      \sup_{\NormVector{\primal} = 1, \lzero\np{\primal}\leq\LocalRank}
      \proscal{\SingularValues\np{\dualMatrix}}{\primal}
      \intertext{because $ \SingularValues\np{\dualMatrix} \in  \Cone $, hence the supremum is
      achieved on the cone~$\Cone$ by the well-known Hardy-Littlewood-P\'olya
      rearrangement inequality}
    &=
      \supportFunction_{\lzero^{\leq\LocalRank}\cap\SPHERE_{\NormVector{\cdot}}}
      \np{ \SingularValues\np{\dualMatrix} }
      \intertext{by definition~\eqref{eq:pseudonormlzero_level_set} of 
      the level sets~$ \LevelSet{\lzero}{\LocalRank} $,
      and as $ \TripleNormSphere_{\NormVector{\cdot}} \subset \RR^{\minrowscolumns} $ is the unit
      sphere of the norm~$\NormVector{\cdot}$}
    &=
      \CoordinateNormDual{\NormVector{ \SingularValues\np{\dualMatrix} }}{\LocalRank}
      \tag{as  $\CoordinateNormDual{\NormVector{\cdot}}{\LocalRank} =
      \supportFunction_{\lzero^{\leq\LocalRank}\cap\SPHERE_{\NormVector{\cdot}}}$ by~\eqref{eq:dual_coordinate_norm}}
      \eqfinp 
  \end{align*}
  Thus, we have proven that $ \CoordinateNormDualMatrix{\NormMatrix{\cdot}}{\LocalRank}=
  \CoordinateNormDual{\NormVector{\cdot}}{\LocalRank} \circ\SingularValues $, that is, the
  first equality in~\eqref{eq:rankmatrixnormstopnorm_b}.
  
  Now, it is easily established by~\eqref{eq:dual_coordinate_norm} that the vector
  norm~$ \CoordinateNormDual{\NormVector{ \cdot }}{\LocalRank} $ is a symmetric
  absolute norm (hence so is its dual norm $ \CoordinateNorm{\NormVector{\cdot}}{\LocalRank} $).
  As a consequence, the first equality in~\eqref{eq:rankmatrixnormstopnorm}
  easily follows by using~\eqref{eq:unitarilyinvariantnormequivalence_dual}
  (see \cite[Proposition IV.2.11]{Bhatia:1997}), 
  giving
  \begin{equation}
    \CoordinateNormMatrix{\NormMatrix{\cdot}}{\LocalRank}=
    \Bp{ \CoordinateNormDualMatrix{\NormMatrix{\cdot}}{\LocalRank} }_\star
    =  \Bp{ \CoordinateNormDual{\NormVector{\cdot}}{\LocalRank} \circ\SingularValues }_\star
    = \Bp{ \CoordinateNormDual{\NormVector{\cdot}}{\LocalRank} }_\star \circ\SingularValues
    = \CoordinateNorm{\NormVector{\cdot}}{\LocalRank} \circ\SingularValues 
    \eqfinp
  \end{equation}
  Thus, we have proven the   first equality in~\eqref{eq:rankmatrixnormstopnorm_a}.

  %
  There remains to prove the second equalities
  in~\eqref{eq:rankmatrixnormstopnorm_a}
  and \eqref{eq:rankmatrixnormstopnorm_b}.
  Because the symmetric absolute norm~$\NormVector{\cdot}$ in Proposition~\ref{pr:Links_with_coordinate_norms_and_the_lzeropseudonorm}
  is a symmetric monotonic norm~\cite[Theorem~2]{Bauer-Stoer-Witzgall:1961}, it is
  a so-called 
  orthant-monotonic norm~\cite{Gries:1967,Gries-Stoer:1967}.
  As a consequence, by \cite[Proposition~7]{Chancelier-DeLara:2022_SVVA}, 
  we get that, for any $ \LocalRank \in \ic{1,\minrowscolumns} $,
  $ \CoordinateNorm{\NormVector{\cdot}}{\LocalRank} = 
  \SupportDualNorm{\NormVector{\cdot}}{\LocalRank} $ 
  and 
  $ \CoordinateNormDual{\NormVector{\cdot}}{\LocalRank} =
  \TopDualNorm{\NormVector{\cdot}}{\LocalRank} $.
  This gives the second equalities
  in~\eqref{eq:rankmatrixnormstopnorm_a}
  and \eqref{eq:rankmatrixnormstopnorm_b}, and ends the proof. 

\end{proof}

As an illustration, when the source norm~$\NormMatrix{\cdot}$ is the \emph{Frobenius norm}
given by
\begin{equation}
  \NormMatrix{\primalMatrix}_{F}= 
  \sqrt{\Trace\np{\primalMatrix\transpose{\primalMatrix}}}
  =
  \NormVector{\SingularValues\np{\primalMatrix}}_{\ell_2}
  =
  \sqrt{\sum_{\LocalDiagonal=1}^{\minrowscolumns}\SingularValues_{\LocalDiagonal}\np{\primalMatrix}^2}
  \eqsepv\forall \primalMatrix\in \MATRIXdim{\rows}{\columns}
  \eqfinv
  \label{eq:Frobenius_norm}
\end{equation}
Equations~\eqref{eq:rankmatrixnormstopnorm_a} and
\eqref{eq:rankmatrixnormstopnorm_b} are
\begin{subequations}
  \begin{align}
    \CoordinateNormMatrix{\NormMatrix{\primalMatrix}_{F}}{\LocalRank}
    &=
      \LpSupportNorm{ \SingularValues\np{\primalMatrix} }{2}{\LocalRank} 
      \eqsepv
      \forall \LocalRank \in \ic{1,\minrowscolumns}
      \eqsepv\forall \primalMatrix\in \MATRIXdim{\rows}{\columns}
      \eqfinv
      \label{eq:Frobenius_norm_rankmatrixnormstopnorm_a}
    \\
    \CoordinateNormDualMatrix{\NormMatrix{\dualMatrix}_{F}}{\LocalRank}
    &=
      \LpTopNorm{ \SingularValues\np{\dualMatrix} }{2}{\LocalRank}
      =\sqrt{\sum\limits_{\LocalDiagonal=1}^{\LocalRank}\SingularValues_{\LocalDiagonal}\np{\dualMatrix}^2}
      \eqsepv
      \forall \LocalRank \in \ic{1,\minrowscolumns}
      \eqsepv\forall \dualMatrix\in \MATRIXdim{\rows}{\columns}
      \eqfinp
      \label{eq:Frobenius_norm_rankmatrixnormstopnorm_b}
  \end{align}
\end{subequations}

\section{\CAPRA-conjugacies and the rank function}
\label{CAPRA-conjugacies_and_the_rank_function}

In~\S\ref{CAPRA-couplings_and_conjugacies_for_matrices},
we adapt the definition of \Capra-couplings in
\cite{Chancelier-DeLara:2022_CAPRA_OPTIMIZATION}
to the case of matrices instead of vectors.
In~\S\ref{Variational_lower_bound_of_the_rank_function},
we provide a variational lower bound of the rank function.

\subsection{\CAPRA-couplings and conjugacies for matrices}
\label{CAPRA-couplings_and_conjugacies_for_matrices}

We adapt the definition of \Capra-couplings in
\cite{Chancelier-DeLara:2022_CAPRA_OPTIMIZATION} 
to the space~$\MATRIXdim{\rows}{\columns}$ of matrices. 

\begin{definition}
  \label{de:coupling_CAPRA}
  Let $\NormMatrix{\cdot}$ be a source matrix norm on~$\MATRIXdim{\rows}{\columns}$.
  The \Capra-coupling~$\couplingCapraNormMatrix$,
  between $\MATRIXdim{\rows}{\columns}$ and $\MATRIXdim{\rows}{\columns}$,
  associated with~$\NormMatrix{\cdot}$, is defined by: 
  \begin{equation}
    \forall \primalMatrix,\dualMatrix\in\MATRIXdim{\rows}{\columns} \eqsepv
    \couplingCapraNormMatrix\np{\primalMatrix,\dualMatrix} = 
    \begin{cases}
      \frac{\Trace\np{\primalMatrix\transpose{\dualMatrix}}}{\NormMatrix{\primalMatrix}}
      & \text{ if } \primalMatrix\neq 0 \eqfinv 
      \\
      0 & \text{ otherwise.}
    \end{cases}
    \label{eq:coupling_CAPRA}                      
  \end{equation}
  For any function $ \fonctionprimalMatrix : \MATRIXdim{\rows}{\columns} \to \barRR
  $, the \emph{$\couplingCapraNormMatrix$-Fenchel-Moreau conjugate},
  or \emph{\Capra-conjugate}, is
  the function $ \SFM{\fonctionprimalMatrix}{\couplingCapraNormMatrix} : \MATRIXdim{\rows}{\columns} \to \barRR $ 
  defined by
  \begin{align}
    \SFM{\fonctionprimalMatrix}{\couplingCapraNormMatrix}\np{\dualMatrix}
    &= 
      \sup_{\primalMatrix \in \MATRIXdim{\rows}{\columns}} \Bp{ \couplingCapraNormMatrix\np{\primalMatrix,\dualMatrix} 
      \LowPlus \bp{ -\fonctionprimalMatrix\np{\primalMatrix} } } 
      \eqsepv \forall \dualMatrix \in \MATRIXdim{\rows}{\columns}
      \eqfinv
      \label{eq:Fenchel-Moreau_conjugate}
      \intertext{and the \emph{$\couplingCapraNormMatrix$-Fenchel-Moreau biconjugate},
      or \emph{\Capra-biconjugate},  is
      the function $ \SFMbi{\fonctionprimalMatrix}{\couplingCapraNormMatrix} : \MATRIXdim{\rows}{\columns} \to \barRR $ 
      defined by}
      \SFMbi{\fonctionprimalMatrix}{\couplingCapraNormMatrix}\np{\primalMatrix}
    &= 
      \sup_{\dualMatrix \in \MATRIXdim{\rows}{\columns}} \Bp{ \couplingCapraNormMatrix\np{\primalMatrix,\dualMatrix} 
      \LowPlus \bp{ -\SFM{\fonctionprimalMatrix}{\couplingCapraNormMatrix}\np{\dualMatrix} } } 
      \eqsepv \forall \primalMatrix \in \MATRIXdim{\rows}{\columns}
      \eqfinp 
      \label{eq:Fenchel-Moreau_biconjugate}
  \end{align}
\end{definition}

Then, we show below that the \Capra-conjugate and biconjugate of the rank function
are expressed in function of the generalized dual $\LocalRank$-rank matrix norms $
\sequence{\CoordinateNormDualMatrix{\NormMatrix{\cdot}}{\LocalRank}}{\LocalRank \in\ic{1,\minrowscolumns}} $
and the generalized $\LocalRank$-rank matrix norms $
\sequence{\CoordinateNormMatrix{\NormMatrix{\cdot}}{\LocalRank}}{\LocalRank \in\ic{1,\minrowscolumns}} $, 
given by Definition~\ref{de:CoordinateNormMatrix}.
We do not give the proof as it is a simple adaptation, to the matrix case, of the proofs of
\cite[Propositions~4.4, 4.5]{Chancelier-DeLara:2022_CAPRA_OPTIMIZATION}.

\begin{proposition}
  \label{pr:rank_biconjugate_varphi}
  Let $\NormMatrix{\cdot}$ be a source matrix norm on $\MATRIXdim{\rows}{\columns}$,
  and $\couplingCapraNormMatrix$ be the associated \Capra-coupling as in
  Definition~\ref{de:coupling_CAPRA}. 

  For any function $ \varphi : \ic{0,\minrowscolumns} \to \barRR $, we have
  that (with the convention that $ \CoordinateNormDualMatrix{\NormMatrix{\cdot}}{0}=0$)
  \begin{align}
    \SFM{ \np{\varphi\circ\rank} }{\couplingCapraNormMatrix}\np{\dualMatrix}
    &=
      \sup_{\LocalDiagonal\in\ic{0,\minrowscolumns}}
      \Ba{
      \CoordinateNormDualMatrix{\NormMatrix{\dualMatrix}}{\LocalDiagonal}-\varphi\np{\LocalDiagonal} 
      }
      \eqsepv \forall  \dualMatrix   \in \MATRIXdim{\rows}{\columns}                                                         
      \eqfinv
      \label{eq:conjugate_rank_varphi}                                                                          
      \intertext{and, for any function $ \varphi : \ic{0,\minrowscolumns} \to \RR_+ $
      (that is, with nonnegative finite values)
      and such that $ \varphi\np{0}=0 $, we have that}
      \SFMbi{\np{\varphi\circ\rank}}{\couplingCapraNormMatrix}\np{\primalMatrix}
    &=
      \frac{ 1 }{ \NormMatrix{\primalMatrix} } 
      \min_{ \substack{%
      \primalMatrix^{(1)} \in \MATRIXdim{\rows}{\columns}, \ldots,
      \primalMatrix^{(\minrowscolumns)} \in \MATRIXdim{\rows}{\columns}
    \\
    \sum_{ \LocalRank=1 }^{ \minrowscolumns } \CoordinateNormMatrix{\NormMatrix{\primalMatrix^{(\LocalRank)}}}{\LocalRank}
    \leq \NormMatrix{\primalMatrix}
    \\
    \sum_{ \LocalRank=1 }^{ \minrowscolumns } \primalMatrix^{(\LocalRank)} = \primalMatrix} }
    \sum_{ \LocalRank=1 }^{ \minrowscolumns }\varphi\np{\LocalRank}
    \CoordinateNormMatrix{\NormMatrix{\primalMatrix^{(\LocalRank)}}}{\LocalRank} 
    \eqsepv \forall \primalMatrix \in \MATRIXdim{\rows}{\columns}\setminus\{0\} 
    \eqfinp
    \label{eq:biconjugate_rank_varphi}
  \end{align}
\end{proposition}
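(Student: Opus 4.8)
The plan is to establish~\eqref{eq:conjugate_rank_varphi} by a rank-level decomposition of the supremum, and then to derive~\eqref{eq:biconjugate_rank_varphi} by recognizing the \Capra-biconjugate as a classical Fenchel conjugate of a maximum of norms, which I dualize into an infimal convolution.

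\textbf{Conjugate.} First I would split the supremum in the definition~\eqref{eq:Fenchel-Moreau_conjugate} according to the value of the rank. Since \( \couplingCapraNormMatrix\np{0,\dualMatrix}=0 \) and \( \rank\np{0}=0 \), the matrix \( \primalMatrix=0 \) contributes \( -\varphi\np{0} \), which is exactly the term \( \LocalRank=0 \) under the convention \( \CoordinateNormDualMatrix{\NormMatrix{\cdot}}{0}=0 \). For \( \primalMatrix\neq 0 \), I would write \( \sup_{\primalMatrix\neq 0}=\sup_{\LocalRank\in\ic{1,\minrowscolumns}}\sup_{\rank\np{\primalMatrix}=\LocalRank} \) and exploit the \( 1 \)-homogeneity of the map \( \primalMatrix\mapsto\Trace\np{\primalMatrix\transpose{\dualMatrix}}/\NormMatrix{\primalMatrix} \) to restrict each inner supremum to the set \( \SPHERErangK{\LocalRank}=\SPHERE_{\NormMatrix{\cdot}}\cap\MATRIXrang{\LocalRank} \). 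By the property~\eqref{eq:CoordinateNormMatrix=sup_RankTopNorm}, that inner supremum equals \( \SupportFunction_{\SPHERErangK{\LocalRank}}\np{\dualMatrix}=\CoordinateNormDualMatrix{\NormMatrix{\dualMatrix}}{\LocalRank} \), which yields~\eqref{eq:conjugate_rank_varphi}. Since \( \varphi \) is finite-valued and each \( \CoordinateNormDualMatrix{\NormMatrix{\cdot}}{\LocalRank} \) is a finite norm, the function \( g\coloneqq\SFM{\np{\varphi\circ\rank}}{\couplingCapraNormMatrix} \) is finite everywhere, so that the Moreau lower additions reduce to ordinary additions throughout the rest of the argument.

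\textbf{Biconjugate.} Plugging~\eqref{eq:conjugate_rank_varphi} into~\eqref{eq:Fenchel-Moreau_biconjugate}, for \( \primalMatrix\neq 0 \) I obtain
\[
  \SFMbi{\np{\varphi\circ\rank}}{\couplingCapraNormMatrix}\np{\primalMatrix}
  = \sup_{\dualMatrix\in\MATRIXdim{\rows}{\columns}}
  \Bp{ \frac{\Trace\np{\primalMatrix\transpose{\dualMatrix}}}{\NormMatrix{\primalMatrix}} - g\np{\dualMatrix} }
  = g^{*}\Bp{ \frac{\primalMatrix}{\NormMatrix{\primalMatrix}} }
  \eqfinv
\]
the classical Fenchel conjugate \( g^{*} \) (for the trace duality) of \( g=\sup_{\LocalRank\in\ic{0,\minrowscolumns}}\bp{\CoordinateNormDualMatrix{\NormMatrix{\cdot}}{\LocalRank}-\varphi\np{\LocalRank}} \), evaluated at \( \primalMatrix/\NormMatrix{\primalMatrix} \). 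The crux is to compute \( g^{*} \). Each summand \( \CoordinateNormDualMatrix{\NormMatrix{\cdot}}{\LocalRank}-\varphi\np{\LocalRank} \) is a norm shifted by a constant, whose conjugate is the indicator of the unit ball of the dual norm \( \CoordinateNormMatrix{\NormMatrix{\cdot}}{\LocalRank} \) (Definition~\ref{de:CoordinateNormMatrix}) raised by \( \varphi\np{\LocalRank} \). I would then invoke the standard convex-analysis identity expressing the conjugate of a finite pointwise maximum of convex functions as the convexification of the pointwise minimum of their conjugates~\cite{Bauschke-Combettes:2017}: writing \( z=\primalMatrix/\NormMatrix{\primalMatrix} \), this turns \( g^{*}\np{z} \) into an infimum, over convex multipliers \( \np{\lambda_{\LocalRank}} \) in the simplex and decompositions \( \sum_{\LocalRank}\lambda_{\LocalRank}z^{(\LocalRank)}=z \) with \( \CoordinateNormMatrix{\NormMatrix{z^{(\LocalRank)}}}{\LocalRank}\leq 1 \), of \( \sum_{\LocalRank}\lambda_{\LocalRank}\varphi\np{\LocalRank} \). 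Setting \( \primalMatrix^{(\LocalRank)}=\NormMatrix{\primalMatrix}\,\lambda_{\LocalRank}z^{(\LocalRank)} \) and using \( 1 \)-homogeneity, the constraint \( \CoordinateNormMatrix{\NormMatrix{z^{(\LocalRank)}}}{\LocalRank}\leq 1 \) reads \( \CoordinateNormMatrix{\NormMatrix{\primalMatrix^{(\LocalRank)}}}{\LocalRank}\leq \NormMatrix{\primalMatrix}\,\lambda_{\LocalRank} \), and the objective becomes \( \frac{1}{\NormMatrix{\primalMatrix}}\sum_{\LocalRank}\varphi\np{\LocalRank}\CoordinateNormMatrix{\NormMatrix{\primalMatrix^{(\LocalRank)}}}{\LocalRank} \).

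The final bookkeeping uses the two standing hypotheses \( \varphi\geq 0 \) and \( \varphi\np{0}=0 \): since the \( \LocalRank=0 \) component carries zero cost and zero norm (\( \CoordinateNormDualMatrix{\NormMatrix{\cdot}}{0}=0 \)), any slack in the simplex equality \( \sum_{\LocalRank}\lambda_{\LocalRank}=1 \) can be absorbed into \( \lambda_{0} \), and choosing \( \lambda_{\LocalRank}=\CoordinateNormMatrix{\NormMatrix{\primalMatrix^{(\LocalRank)}}}{\LocalRank}/\NormMatrix{\primalMatrix} \) for \( \LocalRank\geq 1 \) is optimal because \( \varphi\geq 0 \). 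This converts the simplex equality into the inequality \( \sum_{\LocalRank=1}^{\minrowscolumns}\CoordinateNormMatrix{\NormMatrix{\primalMatrix^{(\LocalRank)}}}{\LocalRank}\leq\NormMatrix{\primalMatrix} \) and produces exactly~\eqref{eq:biconjugate_rank_varphi}. The main obstacle is the Fenchel-duality step: I must justify that the conjugate-of-maximum formula holds with \emph{attainment} (so that the infimum is a minimum, as stated), which I expect to secure from a Slater condition — the point \( \dualMatrix=0 \) is strictly feasible in the epigraphic reformulation of \( g \), as \( \CoordinateNormDualMatrix{\NormMatrix{0}}{\LocalRank}-\varphi\np{\LocalRank}=-\varphi\np{\LocalRank}\leq 0 \) — together with the compactness, in finite dimension, of the unit balls of the norms \( \CoordinateNormMatrix{\NormMatrix{\cdot}}{\LocalRank} \), which makes the feasible set of decompositions compact and the objective continuous.
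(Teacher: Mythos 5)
Your proposal is correct and takes essentially the same route as the paper, which omits the proof and defers to the vector-case arguments of \cite[Propositions~11, 12]{Chancelier-DeLara:2020_CAPRA_OPTIMIZATION}: splitting the supremum by rank level and invoking~\eqref{eq:CoordinateNormMatrix=sup_RankTopNorm} for the conjugate, then treating the biconjugate as a classical Fenchel conjugate of a finite maximum of shifted norms and dualizing it into a convexified minimum of dual-ball indicators is precisely the intended adaptation to matrices. Your compactness argument (rather than the somewhat misplaced Slater remark) is indeed what secures both the absence of a closure operation in the conjugate-of-maximum identity and the attainment of the minimum in~\eqref{eq:biconjugate_rank_varphi}.
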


\subsection{Variational lower bound and expression of the rank function}
\label{Variational_lower_bound_of_the_rank_function}

Now, thanks to Proposition~\ref{pr:rank_biconjugate_varphi}, we obtain a
variational lower bound of the rank function,
and also a variational expression when the source norm is the Frobenius norm. 

\begin{theorem}
  Let $\NormMatrix{\cdot}$ be a source norm on the space
  $\MATRIXdim{\rows}{\columns}$ of matrices,
  with associated sequence $
  \sequence{\CoordinateNormMatrix{\NormMatrix{\cdot}}{\LocalRank}}{\LocalRank \in\ic{1,\minrowscolumns}} $ of generalized $\LocalRank$-rank matrix
  norms as in Definition~\ref{de:CoordinateNormMatrix}. 
  Then,   we have the following variational lower bound of the rank function
  \begin{equation}
    \rank\np{\primalMatrix} \geq
    \frac{ 1 }{ \NormMatrix{\primalMatrix} }       \min_{ \substack{%
        \primalMatrix^{(1)} \in \MATRIXdim{\rows}{\columns}, \ldots,
        \primalMatrix^{(\minrowscolumns)} \in \MATRIXdim{\rows}{\columns}
        \\
        \sum_{ \LocalRank=1 }^{ \minrowscolumns } \CoordinateNormMatrix{\NormMatrix{\primalMatrix^{(\LocalRank)}}}{\LocalRank}
        \leq \NormMatrix{\primalMatrix}
        \\
        \sum_{ \LocalRank=1 }^{ \minrowscolumns } \primalMatrix^{(\LocalRank)} = \primalMatrix} }
    \sum_{ \LocalRank=1 }^{ \minrowscolumns }\LocalRank
    \CoordinateNormMatrix{\NormMatrix{\primalMatrix^{(\LocalRank)}}}{\LocalRank} 
    \eqsepv \forall \primalMatrix \in \MATRIXdim{\rows}{\columns}\setminus\{0\} 
    \eqfinp
    \label{eq:variational_lower_bound_of_the_rank_function}      
  \end{equation}
  Moreover, if the source norm is the {Frobenius norm}~$\NormMatrix{\cdot}_{F}$
  given by~\eqref{eq:Frobenius_norm}, 
  the inequality in~\eqref{eq:variational_lower_bound_of_the_rank_function} is
  an equality:
   \begin{equation}
    \rank\np{\primalMatrix} =
    \frac{ 1 }{ \NormMatrix{\primalMatrix}_{F} }       \min_{ \substack{%
        \primalMatrix^{(1)} \in \MATRIXdim{\rows}{\columns}, \ldots,
        \primalMatrix^{(\minrowscolumns)} \in \MATRIXdim{\rows}{\columns}
        \\
        \sum_{ \LocalRank=1 }^{ \minrowscolumns } \CoordinateNormMatrix{\NormMatrix{\primalMatrix^{(\LocalRank)}}_{F}}{\LocalRank}
        \leq \NormMatrix{\primalMatrix}_{F}
        \\
        \sum_{ \LocalRank=1 }^{ \minrowscolumns } \primalMatrix^{(\LocalRank)} = \primalMatrix} }
    \sum_{ \LocalRank=1 }^{ \minrowscolumns }\LocalRank
    \CoordinateNormMatrix{\NormMatrix{\primalMatrix^{(\LocalRank)}}_{F}}{\LocalRank} 
    \eqsepv \forall \primalMatrix \in \MATRIXdim{\rows}{\columns}\setminus\{0\} 
    \eqfinp
    \label{eq:variational_rank_function}      
  \end{equation}
  \label{th:variational_lower_bound_of_the_rank_function}
\end{theorem}

\begin{proof}
  From the expression~\eqref{eq:biconjugate_rank_varphi} of
  $ \SFMbi{\rank}{\couplingCapraNormMatrix}$,
  with $\varphi$ the identity function, and from the (true for any coupling)
  inequality 
  $ \rank \geq \SFMbi{\rank}{\couplingCapraNormMatrix}$, we readily
  deduce~\eqref{eq:variational_lower_bound_of_the_rank_function}.
  
  In the rest of the proof --- which follows that of
  \cite[Theorem~3.5]{Chancelier-DeLara:2021_ECAPRA_JCA} ---
  $\NormMatrix{\cdot}$ denotes the Frobenius norm~\eqref{eq:Frobenius_norm}
  (instead of $\NormMatrix{\cdot}_{F}$ to alleviate notation).
  We consider a fixed matrix $ \primalMatrix \in
  \MATRIXdim{\rows}{\columns}\setminus\{0\} $
  and we are going to show that
  $ \rank\np{\primalMatrix} =
  \SFMbi{\rank}{\couplingCapraNormMatrix}\np{\primalMatrix} $.
  We denote by $\LocalRank=\rank\np{\primalMatrix} \geq 1 $ the rank
  of~$\primalMatrix$.
  By the factorization Equation~\eqref{eq:Frobenius_norm_rankmatrixnormstopnorm_b}, and as $
  \SingularValues_{\LocalDiagonal}\np{\primalMatrix}=0 \iff \LocalDiagonal > \LocalRank$, we have that
  \begin{equation}
    \CoordinateNormDualMatrix{\NormMatrix{\primalMatrix}}{\LocalIndex}
    =\LpTopNorm{ \SingularValues\np{\primalMatrix} }{2}{\LocalIndex}
    =\sqrt{\sum\limits_{\LocalDiagonal=1}^{\LocalIndex}\SingularValues_{\LocalDiagonal}\np{\primalMatrix}^2}
    \quad
    \begin{cases}
      &=
        \sqrt{\sum\limits_{\LocalDiagonal=1}^{\LocalRank}\SingularValues_{\LocalDiagonal}\np{\primalMatrix}^2} 
        =\NormMatrix{\primalMatrix}
        \eqsepv \forall \LocalIndex \geq \LocalRank
        \eqfinv 
      \\
      &<
        \sqrt{\sum\limits_{\LocalDiagonal=1}^{\LocalRank}\SingularValues_{\LocalDiagonal}\np{\primalMatrix}^2} 
        =\NormMatrix{\primalMatrix}
        \eqsepv \forall \LocalIndex < \LocalRank
        \eqfinp
    \end{cases}  
    \label{eq:variational_lower_bound_of_the_rank_function_proof}
  \end{equation}
  We consider the function $\phi: ]0,+\infty[ \to \RR$ defined by
  \begin{equation}
    \phi(\lambda) =
    \frac{ \Trace\np{\lambda \primalMatrix\transpose{\primalMatrix}} }{ \NormMatrix{\primalMatrix} }
    - \sup_{\LocalIndex\in\ic{0,\minrowscolumns}} 
    \Ba{ \CoordinateNormDualMatrix{\NormMatrix{\lambda\primalMatrix}}{\LocalIndex} - \LocalIndex
    } 
    \eqsepv \forall \lambda > 0 \eqfinv
    \label{eq:phi}
  \end{equation}
  and we will show that 
  $ \lim_{\lambda \to +\infty} \phi(\lambda) =\LocalRank $.
  We have that
  \begin{align*}
    \phi(\lambda) 
    &=
      \lambda \NormMatrix{\primalMatrix}
      - \sup\Bp{0,\sup_{\LocalIndex\in\ic{1,\minrowscolumns}} 
      \Ba{ \CoordinateNormDualMatrix{\lambda\NormMatrix{\primalMatrix}}{\LocalIndex} - \LocalIndex
      } }
      \intertext{ by definition~\eqref{eq:phi} of~$\phi$,
      by the convention that $
      \CoordinateNormDualMatrix{\NormMatrix{\primalMatrix}}{0}=0 $ and by 
      $ \NormMatrix{\primalMatrix}^2 = \Trace\np{\primalMatrix\transpose{\primalMatrix}} $ }
    &= 
      \lambda \CoordinateNormDualMatrix{\NormMatrix{\primalMatrix}}{\LocalRank}
      + \inf \Ba{0, -\sup_{\LocalIndex\in\ic{1,\minrowscolumns}} 
      \Bc{ \lambda \CoordinateNormDualMatrix{\NormMatrix{\primalMatrix}}{\LocalIndex} - \LocalIndex } }
      \tag{as
      $\NormMatrix{\primalMatrix}=\CoordinateNormDualMatrix{\NormMatrix{\primalMatrix}}{\LocalRank}$
      by~\eqref{eq:variational_lower_bound_of_the_rank_function_proof}}
    \\
    &= 
      \inf \Ba{ \lambda \CoordinateNormDualMatrix{\NormMatrix{\primalMatrix}}{\LocalRank} ,
      \lambda \CoordinateNormDualMatrix{\NormMatrix{\primalMatrix}}{\LocalRank} + 
      \inf_{\LocalIndex\in\ic{1,\minrowscolumns}} \Bp{ -\Bc{ \lambda \CoordinateNormDualMatrix{\NormMatrix{\primalMatrix}}{\LocalIndex} - \LocalIndex } } }
    \\
    &= 
      \inf \Ba{ \lambda \CoordinateNormDualMatrix{\NormMatrix{\primalMatrix}}{\LocalRank} ,
      \inf_{\LocalIndex\in\ic{1,\minrowscolumns}} \Bp{ \lambda \bp{ \CoordinateNormDualMatrix{\NormMatrix{\primalMatrix}}{\LocalRank} -
      \CoordinateNormDualMatrix{\NormMatrix{\primalMatrix}}{\LocalIndex} } + \LocalIndex } }
    \\
    &= 
      \inf
      \Big\{ \lambda \CoordinateNormDualMatrix{\NormMatrix{\primalMatrix}}{\LocalRank} ,
      \inf_{\LocalIndex\in\ic{1,\LocalRank-1}} \Bp{ \lambda \bp{ \CoordinateNormDualMatrix{\NormMatrix{\primalMatrix}}{\LocalRank} -
      \CoordinateNormDualMatrix{\NormMatrix{\primalMatrix}}{\LocalIndex} } + \LocalIndex } ,
    \\
    &
      \hspace{3cm} \inf_{\LocalIndex\in\ic{\LocalRank,\minrowscolumns}} \Bp{ \lambda \bp{ \CoordinateNormDualMatrix{\NormMatrix{\primalMatrix}}{\LocalRank} -
      \CoordinateNormDualMatrix{\NormMatrix{\primalMatrix}}{\LocalIndex} } + \LocalIndex }
      \Big\}
    \\
    &= 
      \inf \Ba{ \lambda \CoordinateNormDualMatrix{\NormMatrix{\primalMatrix}}{\LocalRank} ,
      \inf_{\LocalIndex\in\ic{1,\LocalRank-1}} \Bp{ \lambda \bp{ \CoordinateNormDualMatrix{\NormMatrix{\primalMatrix}}{\LocalRank} -
      \CoordinateNormDualMatrix{\NormMatrix{\primalMatrix}}{\LocalIndex} } + \LocalIndex }, \LocalRank }
  \end{align*}
  as $\CoordinateNormDualMatrix{\NormMatrix{\primalMatrix}}{\LocalIndex}=
  \CoordinateNormDualMatrix{\NormMatrix{\primalMatrix}}{\LocalRank}$ for
  $\LocalIndex \geq \LocalRank$
  by~\eqref{eq:variational_lower_bound_of_the_rank_function_proof}.
  Let us show that the two first terms in the infimum
  go to $+\infty$ when $ \lambda \to +\infty $.
  The first term $ \lambda \CoordinateNormDualMatrix{\NormMatrix{\primalMatrix}}{\LocalRank} $ goes to $+\infty$ 
  because $ \CoordinateNormDualMatrix{\NormMatrix{\primalMatrix}}{\LocalRank}=\NormMatrix{\primalMatrix}>0 $ by assumption
  ($\primalMatrix \neq 0$).
  The second term
  $ \inf_{\LocalIndex\in\ic{1,\LocalRank-1}}
  \Bp{ \lambda \bp{ \CoordinateNormDualMatrix{\NormMatrix{\primalMatrix}}{\LocalRank} - %
      \CoordinateNormDualMatrix{\NormMatrix{\primalMatrix}}{\LocalIndex}}} + \LocalIndex  $
  also goes to $+\infty$ because 
  $\rank\np{\primalMatrix}=\LocalRank $, so that 
  $ \NormMatrix{\primalMatrix}=\CoordinateNormDualMatrix{\NormMatrix{\primalMatrix}}{\LocalRank} > 
  \CoordinateNormDualMatrix{\NormMatrix{\primalMatrix}}{\LocalIndex} $
  for $\LocalIndex\in\ic{1,\LocalRank-1}$
  as shown in~\eqref{eq:variational_lower_bound_of_the_rank_function_proof}.
  Therefore, we get that 
  $ \lim_{\lambda \to +\infty} \phi(\lambda) = 
  \inf \{ +\infty, +\infty, \LocalRank \}= \LocalRank $.
  This concludes the proof since 
  \begin{align*}
    \LocalRank = \lim_{\lambda \to +\infty} \phi(\lambda) 
    & \leq 
      \sup_{\dualMatrix \in \MATRIXdim{\rows}{\columns} } \bgp{ \frac{ \Trace\np{\primalMatrix\transpose{\dualMatrix}} }{ \NormMatrix{\primalMatrix} }
      - 
      \sup_{\LocalIndex\in\ic{0,\minrowscolumns}} 
      \Ba{ \CoordinateNormDualMatrix{\NormMatrix{\dualMatrix}}{\LocalIndex} - \LocalIndex } }  
      \tag{ by definition~\eqref{eq:phi} of~$\phi$ }
    \\
    &=
      \sup_{\dualMatrix \in \MATRIXdim{\rows}{\columns} } \bgp{ \frac{\Trace\np{\primalMatrix\transpose{\dualMatrix}}}{ \NormMatrix{\primalMatrix} }
      \ - \SFM{\rank}{\couplingCAPRA}\np{\dualMatrix} }
      \intertext{by the formula~\eqref{eq:conjugate_rank_varphi}                                                                          
      for the conjugate $ \SFM{ \rank }{\couplingCAPRA} $ }
    &=
      \SFMbi{ \rank }{\couplingCAPRA}\np{\primalMatrix}
      \tag{ by the biconjugate formula~\eqref{eq:Fenchel-Moreau_biconjugate} }
    \\
    & \leq 
      \rank\np{\primalMatrix} 
      \tag{ as
      $ \SFMbi{ \rank }{\couplingCAPRA} \leq \rank $}
    \\
    & = \LocalRank 
      \tag{ by assumption }
      \eqfinp
  \end{align*}
  Therefore, we have obtained that $ \LocalRank=\SFMbi{ \rank
  }{\couplingCAPRA}\np{\primalMatrix}=\rank\np{\primalMatrix} $.

  This ends the proof.
\end{proof}

\section{Conclusion}

In this paper, we have shown how 
to obtain a 
variational lower bound of the rank function
(Theorem~\ref{th:variational_lower_bound_of_the_rank_function}).
Interestingly, the formula depends on a (source) matrix norm
and on the derived generalized $\LocalRank$-rank matrix norms, that we introduce
(Definition~\ref{de:CoordinateNormMatrix}).
This is made possible by the versatility of the \Capra-couplings,
themselves depending on a matrix norm (Definition~\ref{de:coupling_CAPRA}).
Moreover, we show that the variational expression we obain is equal to the rank function
when the source norm is the Frobenius norm (Theorem~\ref{th:variational_lower_bound_of_the_rank_function}).

Thus, we hope to offer a general framework to derive matrix norms suitable for
optimization problems involving the rank function,
as well as variational formulations.

\newcommand{\noopsort}[1]{} \ifx\undefined\allcaps\def\allcaps#1{#1}\fi

\end{document}